\documentclass[11pt, letterpaper]{article}

\synctex=1

\usepackage{geometry}
\usepackage{color}
\usepackage{verbatim}
\geometry{letterpaper}

\usepackage[utf8x]{inputenc}
\usepackage{t1enc}
\usepackage[english]{babel}

\usepackage{amsmath,amssymb,amsthm}
\usepackage{mathrsfs,esint}
\usepackage{graphicx,wrapfig}
\usepackage[format=hang]{caption}
\usepackage{color}
\usepackage{soul}
\newcommand*{\Hcal}{{\mathcal H}_\infty}
\newcommand*{\Hcali}[1]{\mathcal{H}_{#1}}

\newcommand*{\R}{{\mathbb R}}

\newcommand*{\N}{{\mathbb N}}
\newcommand*{\Z}{{\mathbb Z}}

\newcommand*{\eps}{\varepsilon}

\newcommand*{\Om}{\Omega}

\providecommand*{\vint}[1]{\mathchoice
	{\mathop{\vrule width 5pt height 3 pt depth -2.5pt
			\kern -9pt \kern 1pt\intop}\nolimits_{\kern -5pt{#1}}}
	{\mathop{\vrule width 5pt height 3 pt depth -2.6pt
			\kern -6pt \intop}\nolimits_{\kern -3pt{#1}}}
	{\mathop{\vrule width 5pt height 3 pt depth -2.6pt
			\kern -6pt \intop}\nolimits_{\kern -3pt{#1}}}
	{\mathop{\vrule width 5pt height 3 pt depth -2.6pt
			\kern -6pt \intop}\nolimits_{\kern -3pt{#1}}}}
\newcommand*{\jint}{\fint}
\DeclareMathOperator{\Lip}{Lip}

\DeclareMathOperator{\dist}{dist}
\DeclareMathOperator{\diam}{diam}
\DeclareMathOperator{\rad}{rad}

\numberwithin{equation}{section}
\theoremstyle{plain}
\newtheorem{theorem}[equation]{Theorem}
\newtheorem{prop}[equation]{Proposition}

\newtheorem{lemma}[equation]{Lemma}

\theoremstyle{definition}

\newtheorem{definition}[equation]{Definition}

\newtheorem{example}[equation]{Example}
\newtheorem{constr}[equation]{Construction}
\begin{document}
	
\title{Traces of Newton-Sobolev functions on the visible boundary of domains in doubling metric measure spaces 
supporting a $p$-Poincar\'e inequality}
\author{Sylvester Eriksson-Bique, Ryan Gibara, Riikka Korte, Nageswari Shanmugalingam}
\maketitle
	
\begin{abstract} 
We consider the question of whether a domain with uniformly thick boundary at all locations and at all scales 
has a large portion of its boundary visible from the interior; here, ``visibility'' indicates the existence of John 
curves connecting the interior point to the points on the ``visible boundary''. In this paper, we provide an 
affirmative answer in the setting of a doubling metric measure space supporting a $p$-Poincar\'e inequality for 
$1<p<\infty$, thus extending the results of~\cite{Az, GK, KNN} to non-Ahlfors regular
spaces. We show that $t$-codimensional thickness of the boundary for $0<t<p$ implies $p$-codimensional 
thickness of the visible boundary. For such domains 
we prove that traces of Sobolev functions on the domain belong to the Besov class of the visible boundary.
\end{abstract}
	
\noindent {\small \emph{Key words and phrases}: {John domain, visible boundary, 
Newton-Sobolev function, trace class, Besov space}}

\medskip
		
\noindent {\small Mathematics Subject Classification (2020): {Primary: 30L15, 46E36; Secondary: 28A78, 30H25
}}
		
\section{Introduction}\label{Sec:1}

In the context of potential theory, geometric relationships between a domain (that is, an open
connected subset of a metric measure space) and its boundary play a key role. For instance, it was shown in~\cite{KL, L}
that when the ``visible part'' of the boundary of the domain is large in a codimensional sense, the domain
supports a Hardy-type inequality for Sobolev functions on the domain. Here, by the ``visible part'' of the boundary
we mean the points on the boundary that can be reached by a John curve from inside the domain, where John curves
are curves that satisfy a twisted cone condition. For smooth domains in Euclidean spaces, or more generally 
domains satisfying a non-tangential accessibility condition, all of the boundary is visible. However, for complicated 
domains whose boundaries can be fractal, cuspidal or multiply connected, it is much harder to show that the 
visible boundary is large. A natural way of measuring the size of the set is in terms of Frostman-like measures with a codimensional relationship. It was
shown in~\cite{Mal} that John domains whose boundary sustains a measure that is in a codimensional relationship
with the measure on the domain will support a trace theorem for Sobolev functions, with the trace on the boundary
of the domain belonging to a Besov class. It is therefore natural to ask whether there is an analogous trace theorem
for domains whose ``visible part'' of the boundary is large. 

From the works~\cite{Az, GK, KNN}, there is supporting
evidence that under some mild measure-theoretic conditions on 
its boundary, a domain
will have a large part of its boundary that is visible from the inside. 
In the setting of complete quasiconvex metric measure spaces with doubling measures, this fact was established 
recently in~\cite{EGKS}. 
However, the approach used in~\cite{EGKS}, while in the far more general context of doubling gauge
measures, did not establish the existence of a measure supported in the boundary of the domain.
The principal focus of the current paper is to construct a measure on the portion of the boundary of such a domain
that is ``visible'' from inside the domain, and under mild geometric conditions,  establish that there is a bounded
linear trace operator from the Newton-Sobolev class of functions on such a domain to the Besov class of functions on the 
``visible'' boundary; here the Besov class is with respect to the constructed boundary measure. 
To construct such a 
measure, we revisit the techniques found in~\cite{GK}, which addresses the visibility question in the context of domains
in metric spaces equipped with an Ahlfors regular measure supporting a Poincar\'e inequality. 
Albeit the slight lack of details in~\cite{GK}, the method there
is robust enough that we succeeded in modifying the technique of~\cite{GK} to the setting of metric measure spaces
supporting a Poincar\'e inequality, where the measure is only known to be doubling. In the first step we re-establish
the largeness of the ``visible part'' of the boundary, and in the process, we construct the measure on the boundary as a 
Frostman measure, and in the second step, we establish the existence of the bounded linear trace operator. 
These results are accomplished via the two
main theorems of this paper, Theorem~\ref{thm:main} and Theorem~\ref{thm:main2}. We discuss these in the next two subsections.

We also point out that~\cite{GK} assumed the measure to be Ahlfors regular, that is, there is a
positive constant $Q$ so that the measure of balls of radius $r>0$ is comparable to $r^Q$. This assumption was used
in a crucial way in~\cite{GK} to control the number of balls needed in a covering that is finitely chainable and contains a
well-placed collection of balls touching the boundary of the domain. In our setting of doubling measures, we do not have
this luxury. Thus our adaptation of the argument from~\cite{GK} diverges at this key point, and we circumvent such a lack
of control of the cover by developing an alternate form of control, and our point of divergence from~\cite{GK} 
is Lemma~\ref{lem:not-counting}. We also fix the few errors found in some of the proofs in~\cite{GK}, and for the convenience
of the reader, provide detailed proofs in the present paper.

\subsection{Size of the visible boundary}

For $c\ge 1$ and $z_0\in\Om$, let $\Om_{z_0}(c)$ be the largest subdomain of $\Om$
that is $c$-John with respect to the John center $z_0$.
For a fixed point $z_0\in\Omega$, the \emph{visible boundary} of $\Omega$ near 
$z_0$, $\partial\Omega_{z_0}(c)\cap\partial\Omega$, is the collection of all $w\in\partial\Omega$ for which 
there exists a rectifiable curve $\gamma$ with end points $z_0$ and $w$ such that 
 the length $\ell(\gamma_{z,w})$ satisfies
$\ell(\gamma_{z,w})\leq c\,d(z,\partial\Omega)$ for all $z\in\gamma$. Here, $\gamma_{z,w}$ 
denotes a segment of $\gamma$ joining $z$ to $w$, and $\gamma$ is known as a John curve. 
When $\Omega$ is a \emph{John domain} with John center $z_0$, that is, if every point in $\overline\Omega$ can be joined to 
$z_0\in\Omega$ by a John curve, then the visible boundary coincides with the topological boundary of the domain. 
When the domain is not John, the visible boundary will be strictly smaller. 

In~\cite{KNN}, it was shown that if $\Omega$ is a bounded simply connected domain in the complex plane, 
then  its visible boundary 
 (from near any fixed point in the domain) is large, that is, its $s$-dimensional Hausdorff content 
is comparable from below to a power of the distance from that fixed point in the domain to the boundary of the domain.
This holds for each $0<s<1$, where the ambient Euclidean plane is Ahlfors $2$-regular.
This result was extended to domains in higher dimensional Euclidean spaces in~\cite{Az}. It was shown 
there that
for domains $\Om\subset \R^n$ for which there exists $0<s<n-1$ such that
$\mathcal{H}^s_\infty(B(w,r)\setminus \Omega)\geq c_0\,r^s$ for all 
$w\in\partial\Omega$ and $0<r<\diam(\Omega)$, we can obtain an analogous control of 
$\mathcal{H}^\sigma(\partial\Omega_{z_0}(c)\cap\partial\Omega)$ in a manner similar to
that
found in~\cite{KNN} whenever $0<\sigma<s$.
In \cite{GK}, this study was extended to Ahlfors $Q$-regular metric measure spaces supporting a 
$p$-Poincar\'{e} inequality for some sufficiently small $p$. 

A natural measure 
of largeness in an Ahlfors regular space is given by Hausdorff content; however, when the underlying
measure on the metric space is merely doubling, a more natural measure of largeness of subsets of the metric space
is given by codimensional Hausdorff measure, see Definition~\ref{def:codim-Haus} below. In this paper,
the $t$-codimensional Hausdorff content of a set $A\subset X$ will be denoted by $\Hcal^{-t}(A)$,
and the corresponding restricted $t$-codimensional Hausdorff content (restriction to covers by balls of radius at most
$R>0$) will be denoted by $\mathcal{H}_{R}^{-t}(A)$. The $t$-codimensional Hausdorff measure
of $A$ is denoted by $\mathcal{H}^{-t}(A):=\lim_{R\to 0^+}\mathcal{H}_{R}^{-t}(A)$. 
When the measure on $X$ is Ahlfors $Q$-regular for some $Q>0$, then the $t$-codimensional Hausdorff measure
is equivalent to the $(Q-t)$-dimensional Hausdorff measure.
Even in the setting of Euclidean 
spaces, there are times when it is desirable to study potential theory
in domains equipped with an admissible weight as in~\cite{HKM}. Associated weighted measures are
rarely Ahlfors regular, but are always doubling.

From~\cite{BBS, CKKSS, GKS, GS2, Mal}, it is
clear that in the case of doubling metric measure spaces, traces of Sobolev spaces
are easier to come by if we have control over the codimensional Hausdorff measure of the boundary of
a domain in such a metric measure space. For this reason, it is desirable to obtain information regarding the
codimensional measure of the visible boundary of a domain.
		
The following is the first of the two main results of this paper. 
Here, for $x\in\Om$, we set $d_\Om(x):=\dist(x,\partial\Om)$.
		
\begin{theorem}\label{thm:main}
Let $(X,d,\mu)$ be a complete doubling geodesic metric measure space supporting a $p$-Poincar\'e inequality for some
finite $p> 1$. Let $0<t<p$ and $\Om$ be a domain in $X$, and let $c_0>0$ be such that
for all $w\in\partial\Om$ and for all $0<\rho<\diam(\Om)$, we have 
\begin{equation}\label{eq:Ass1}
	\Hcali{\rho}^{-t}(B(w,\rho)\cap\partial\Om)\ge c_0\, \frac{\mu(B(w,\rho))}{\rho^t}.
\end{equation}
Then for each $z_0\in\Om$ there is a compact subset $P_\infty\subset F:=B(z_0,3d_\Om(z_0))\cap\partial\Om_{z_0}(c)\cap\partial\Om$
and a probability measure $\nu_F$, 
called a Frostman measure, that is supported on $P_\infty$ and satisfies the following inequality:
there is a constant $c_2>0$ such that whenever $\zeta\in P_\infty$ and $0<r<3\,d_{\Om}(z_0)$, we have
\begin{equation}\label{eq:upper-bound-local-Frostman}
\nu_F(B(\zeta,r))\le c_2\, \frac{\mu(B(\zeta,r)\cap\Om)}{r^p}.
\end{equation}
Consequently,
there exist $c\ge 1$ and $c_1>0$ such that for each $z_0\in\Om$,
\begin{equation}\label{eq:Conc}
	\Hcali{d_\Om(z_0)}^{-p}(B(z_0,3d_\Om(z_0))\cap\partial\Om_{z_0}(c)\cap\partial\Om)
		\ge c_1\, \frac{\mu(B(z_0,d_\Om(z_0)))}{d_\Om(z_0)^p}.
\end{equation}
\end{theorem}

 This theorem will be proved in Section~\ref{Sec:4}.
 
 In light of the fact that for a fixed point $z_0\in\Om$
 we only obtain the control~\eqref{eq:Conc} and~\eqref{eq:upper-bound-local-Frostman}
 within the ball $B(z_0,3d_\Om(z_0))$,
 it is reasonable to ask whether we need the validity of~\eqref{eq:Ass1} for every $w\in\partial\Om$
 rather than just for $w\in B(z_0,M_0\, d_\Om(z_0))\cap\partial\Om$ provided we have fixed $z_0\in\Om$. Unfortunately
 such a local version of~\eqref{eq:Ass1} is not sufficient for our proof as the uniformity constant $c$ is only shown to exist,
 without explicit formula for $c$ solely in terms of $c_0$ and the doubling and Poincar\'e constants. Hence the curves
 we use in the proof can get close to $\partial\Om$ far away from $B(z_0,3d_\Om(z_0))\cap\partial\Om$, and so the constant
 $M_0$ needed cannot be pre-determined. For this reason we ask that~\eqref{eq:Ass1} holds for \emph{all} points
 $w\in\partial\Om$ even though, ultimately, we only need to know this inequality for points in $B(z_0, M_0\, d_\Om(z_0))$.

Note that the parameters $\rho$ and $d_\Omega(z_0)$ in the Hausdorff contents in \eqref{eq:Ass1} and \eqref{eq:Conc} are 
chosen to match the scale in question.  This way, the statement is scale-invariant. If $X$ is Ahlfors $Q$-regular, then 
$\mathcal{H}^{-p}_\rho(A)\approx \mathcal{H}^{-p}_\infty(A)$, and we can replace 
$\Hcali{\rho}$ and $\Hcali{d_\Omega(z_0)}$ with $\Hcali{\infty}$ in Theorem~\ref{thm:main}.

The requirement that $X$ be a geodesic space can be dispensed with. Indeed, the support of a Poincar\'e inequality guarantees
that $X$ is quasiconvex, and so a bi-Lipschitz change in the metric results in a geodesic space. By quasiconvexity it is meant that 
there is a constant $C_{\text{qua}}>1$ such that each pair of points $x,y\in X$ can be connected
by a curve of length at most $C_{\text{qua}}\, d(x,y)$. Hence, the
above theorem will continue to hold for spaces that are not geodesic, but in this case the constant $c_1$ and
the factor $3$ in~\eqref{eq:Conc} will change to other constants that now also 
depends on the quasiconvexity constant $C_{\text{qua}}$.

If the space $X$ satisfies a $p$-Poincar\'e inequality for some $p$ with
$1\leq p\leq t$, or if $0<t<p=1$, then
we can still apply Theorem~\ref{thm:main}, since the space also satisfies an 
$s$-Poincar\'e inequality for all $s>\max\{1,t\}$. 
In this case, 
we only obtain that
\begin{equation}\label{eq:Conc--p=1}
	\Hcali{d_\Om(z_0)}^{-s}(\partial\Om_{z_0}(c)\cap\partial\Om)
	\ge c_1\, \frac{\mu(B(z_0,d_\Om(z_0)))}{d_\Om(z_0)^{s}}.
\end{equation}

The requirement that $p>1$ in the statement of the theorem 
allows us to use~\cite{KZ} to obtain and apply a $q$-Poincar\'e inequality for some $1\le q<p$.
In this case, with $t<p$, we can choose such $q\in (t,p)$, and prove~\eqref{eq:Conc} with
$p$ replaced by $q+\eps$, where $\eps=p-q$, following the recipe first given in~\cite{GK}. 
By this reasoning, the main results in \cite{GK, KNN} can be obtained as corollaries. 
While we do allow for the possibility of $t\le 1$ in the above theorem, the conclusion~\eqref{eq:Conc} 
cannot give any information for the $\Hcali{d_\Om(z_0)}^{-q}$-content when $q<1$, 
see the counterexamples in~\cite[page~890]{Az}. 
If $t>1$ and $X$ satisfies a $p$-Poincar\'e inequality for some $p<t$, then it is not known whether the 
conclusion~\eqref{eq:Conc--p=1} holds with $s=t$.

\subsection{Trace result}

One of the principal players in the theory of the Dirichlet problem is the trace, of Sobolev functions on a domain,
at the boundary of the domain. The collection~\cite{Gris} and the paper~\cite{Strichartz} give a nice overview of
the role of traces in the study of Dirichlet problems, see also~\cite{RY} for the related Neumann boundary-value problem
for Waldenfels operators.

Traces of Sobolev functions were first studied by Gagliardo~\cite{Gag} with the
domain being the upper half-space in the Euclidean space $\R^n$, see also~\cite{Besov}. This was further extended to more general 
classes of Euclidean domains, assuming that the domains are either smooth or Lipschitz. Moreover,
the work of Jonsson and Wallin~\cite{JW} considered traces of Sobolev functions on $\R^n$ to subsets of
$\R^n$ that are the so-called $d$-sets with $d<n$. In the non-smooth setting of John domains in metric
measure spaces equipped with a doubling measure supporting a Poincar\'e-type inequality, an analogous
trace result was established by Mal\'y~\cite{Mal}. However, when the domain is not a John domain, the
aforementioned results do not apply to guarantee the existence of traces of Sobolev functions on the domain.
However, with the tools given by Theorem~\ref{thm:main}, we can construct traces on substantial parts of
the boundary near any given point in the domain. This is the second principal focus of the paper.

From Theorem~\ref{thm:main} we obtain the Frostman measure $\nu_F$, which is used to
gain control over
the set $P_\infty$  in order to obtain 
traces of Newton-Sobolev functions on $\Om$; this is the 
content of Theorem~\ref{thm:main2} below.
We will see in Section~\ref{Sec:5}
that by~\eqref{eq:upper-bound-local-Frostman}, whenever $u$ is a Newton-Sobolev function
in $\Om$, then $\nu_F$-almost every point in $P_\infty$ is a Lebesgue point of $u$, see
Lemma~\ref{lem:LebesguePts}. 

\begin{theorem}\label{thm:main2}
Suppose that $(X,d,\mu)$ and the domain $\Om\subset X$ satisfy the hypotheses of Theorem~\ref{thm:main}, that $\nu_F$ is as above, and in addition
we also suppose that there is some $\max\{1,t\}<p<\widehat{q}<q<\infty$ such that 
the measure $\mu\vert_\Om$ is doubling on $\Om$ and supports a $\widehat{q}$-Poincar\'e inequality.
Then there is a bounded linear trace operator 
\[
T:N^{1,q}(\Om)\to B_{q,q,2}^{1-p/q}(P_\infty,d,\nu_F)\cap L^q(P_\infty,\nu_F)
\]
such that $\|Tu\|_{B^{1-p/q}_{q,q,2}(P_\infty,d,\nu_F)}\lesssim \inf_{g_u}\int_\Om {g_u}^q\, d\mu$ and 
$\|Tu\|_{L^q(P_\infty,\nu_F)}\lesssim \|u\|_{N^{1,q}(\Om_{z_0}(c))}$,
where the infimum is over all upper 
gradients $g_u$ of $u\in N^{1,q}(\Om)$. The comparison constants in the above estimates are independent 
of $u$, and moreover the comparison constant associated
with the comparison between the Besov energy and the Sobolev energy is independent of $z_0$ as well.
\end{theorem}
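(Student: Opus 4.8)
The plan is to follow the now-standard route for trace theorems in doubling metric measure spaces supporting Poincaré inequalities, as in~\cite{Mal, GKS, GS2}, but working on the sub-domain $\Om_{z_0}(c)$ and against the Frostman measure $\nu_F$ on $P_\infty$ rather than against a codimensional Hausdorff measure on the whole boundary. First I would fix $u\in N^{1,q}(\Om)$ with upper gradient $g_u$, and define the candidate trace $Tu(\zeta):=\lim_{r\to 0^+}\vint_{B(\zeta,r)\cap\Om}u\,d\mu$ for $\zeta\in P_\infty$. By Lemma~\ref{lem:LebesguePts} (whose proof uses~\eqref{eq:upper-bound-local-Frostman} together with a telescoping/maximal-function argument along a John curve from $z_0$ to $\zeta$), this limit exists for $\nu_F$-a.e.\ $\zeta\in P_\infty$, so $Tu$ is well defined $\nu_F$-a.e.; linearity is immediate. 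The $L^q$-bound $\|Tu\|_{L^q(P_\infty,\nu_F)}\lesssim\|u\|_{N^{1,q}(\Om_{z_0}(c))}$ comes from integrating the pointwise estimate
\[
|Tu(\zeta)|\le \vint_{B(\zeta,d_\Om(z_0))\cap\Om}|u|\,d\mu + C\sum_{j\ge 0} 2^{-j}d_\Om(z_0)\, \vint_{B(\zeta,2^{-j}d_\Om(z_0))\cap\Om} g_u\,d\mu
\]
against $d\nu_F(\zeta)$, using~\eqref{eq:upper-bound-local-Frostman} to convert each ball-average of $g_u$ into a quantity controlled by a (restricted, non-centered) Hardy--Littlewood maximal operator, and then invoking the $L^q$-boundedness of that maximal operator on the doubling space $(\Om,\mu|_\Om)$ together with $p<q$ to sum the geometric series. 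The $\widehat q$-Poincaré inequality on $\Om$ (with $\widehat q<q$) is what upgrades the raw $g_u$-averages into genuine oscillation control: it gives $\vint_{B\cap\Om}|u-u_{B\cap\Om}|\,d\mu\lesssim \mathrm{rad}(B)\big(\vint_{\lambda B\cap\Om}g_u^{\widehat q}\,d\mu\big)^{1/\widehat q}$, and since $\widehat q<q$ the right side is controlled by the $\widehat q$-maximal function of $g_u$, which is $L^q$-bounded.

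Next I would establish the Besov estimate, which is the heart of the matter. Recall that membership in $B_{q,q,2}^{1-p/q}(P_\infty,d,\nu_F)$ is equivalent to finiteness of a Besov energy of the form
\[
\|Tu\|_{B^{1-p/q}_{q,q,2}}^q \approx \sum_{j\in\Z} 2^{j q(1-p/q)} \int_{P_\infty}\vint_{B(\zeta,2^{-j})}\bigl|Tu(\zeta)-Tu(\eta)\bigr|^q\,d\nu_F(\eta)\,d\nu_F(\zeta),
\]
with the sum effectively truncated to scales $2^{-j}\lesssim d_\Om(z_0)$ because $P_\infty\subset B(z_0,3d_\Om(z_0))$ has bounded diameter. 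The strategy is to bound $|Tu(\zeta)-Tu(\eta)|$ for $\zeta,\eta\in P_\infty$ with $d(\zeta,\eta)\approx 2^{-j}$ by a chain of ball-averages of $g_u$ inside $\Om$: connect $\zeta$ and $\eta$ through a point $x_{\zeta,\eta}\in\Om$ at distance $\approx 2^{-j}$ from both (available since the John curves from $z_0$ stay in $\Om_{z_0}(c)$ and the John condition forces a corkscrew-type point), telescope each of $Tu(\zeta)$ and $Tu(\eta)$ down to $u_{B(x_{\zeta,\eta},2^{-j})\cap\Om}$, and apply the $\widehat q$-Poincaré inequality at each level. This produces
\[
|Tu(\zeta)-Tu(\eta)|\lesssim \sum_{i\ge j} 2^{-i}\Bigl(\vint_{B(\zeta,C2^{-i})\cap\Om}g_u^{\widehat q}\,d\mu\Bigr)^{1/\widehat q} + (\text{symmetric term in }\eta).
\]
Inserting this into the Besov energy, interchanging the order of summation, and using~\eqref{eq:upper-bound-local-Frostman} to replace $\nu_F(B(\zeta,2^{-i}))$ by $C 2^{ip}\mu(B(\zeta,2^{-i})\cap\Om)$, the weight $2^{jq(1-p/q)}=2^{jq}2^{-jp}$ is exactly engineered so that after summing over $j\le i$ one is left with $\sum_i \int_{\Om}g_u^{\widehat q}$-type mass, i.e.\ with $\int_\Om g_u^q\,d\mu$ up to a constant. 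The key analytic tools here are: (i) a discrete Hardy/Young-type inequality to handle the inner sum over $i\ge j$ paired with the $j$-weight and the $q$-th power (using $q>1$ and the gap between the Besov smoothness $1-p/q$ and the relevant critical exponents, which is where $p>1$ and $t<p$ enter, via being able to pick $q\in(t,p)$-adjacent exponents as the excerpt indicates); (ii) the $L^{q/\widehat q}$-boundedness of the Hardy--Littlewood maximal operator on $(\Om,\mu|_\Om)$ to turn $\big(\vint g_u^{\widehat q}\big)^{1/\widehat q}$-quantities back into $g_u$ after raising to the $q$-th power, valid precisely because $\widehat q<q$. Bounded overlap of the families $\{B(\zeta,C2^{-i})\cap\Om\}$ (from the doubling property and a covering argument) is used to pass from a sum of local integrals back to a single integral over $\Om$.

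Finally I would assemble the pieces: the two displayed estimates give boundedness of $T$ into $B^{1-p/q}_{q,q,2}(P_\infty,d,\nu_F)\cap L^q(P_\infty,\nu_F)$, with the Besov-side constant depending only on the data of the space, the doubling and Poincaré constants of $\mu|_\Om$, the John constant $c$, the Frostman constant $c_2$, and the exponents $p,\widehat q,q$, hence independent of $z_0$ (here the scale-invariance noted after Theorem~\ref{thm:main}, together with the scale-matched form of~\eqref{eq:upper-bound-local-Frostman}, is exactly what removes the $z_0$-dependence); the $L^q$-side constant is allowed to depend on $z_0$ through $\|u\|_{N^{1,q}(\Om_{z_0}(c))}$. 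Density of Lipschitz functions in $N^{1,q}(\Om)$, or simply working directly with the pointwise Lebesgue-point definition, gives linearity and the fact that $T$ extends the naive restriction on nice functions. The main obstacle I anticipate is step (i)--(ii) of the Besov estimate: carefully choosing the auxiliary point $x_{\zeta,\eta}\in\Om$ so that the telescoping chains for $\zeta$ and for $\eta$ genuinely meet inside $\Om$ (this is where one must use that $P_\infty\subset\partial\Om_{z_0}(c)$ and not merely $P_\infty\subset\partial\Om$, so that John curves provide interior ``shadow'' points at every scale), and then making the double-sum/maximal-function bookkeeping yield the sharp exponent $1-p/q$ rather than something lossy — in particular, ensuring the discrete Hardy inequality is applied on the correct side so that no logarithmic loss appears at the critical scale $2^{-j}\approx d_\Om(z_0)$.
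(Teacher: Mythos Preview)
Your proposal is essentially correct and follows the same overall route as the paper: define $Tu$ via Lebesgue points, telescope along chains of balls, apply the $\widehat q$-Poincar\'e inequality, convert $\mu$-measures to $\nu_F$-measures via~\eqref{eq:upper-bound-local-Frostman}, and close with the $L^{q/\widehat q}$-boundedness of the Hardy--Littlewood maximal operator on $(\Om,\mu|_\Om)$. The paper in fact proves the more general Theorem~\ref{thm:subsidiary1} (for any compact $F_\nu\subset F$ carrying a measure $\nu$ with the upper bound~\eqref{eq:upper-bound-local}) and then obtains Theorem~\ref{thm:main2} as an immediate corollary.

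The one place where the paper's implementation differs from yours, and precisely where it resolves the obstacle you flag at the end, is the chain construction in the Besov estimate. You look for an interior meeting point $x_{\zeta,\eta}\in\Om$ and then telescope along boundary-centered balls $B(\zeta,C2^{-i})\cap\Om$; this can be made to work, but only after invoking the Aikawa--Shanmugalingam result that $(\overline\Om,\mu|_\Om)$ is doubling and supports a $\widehat q$-Poincar\'e inequality (so that boundary-centered balls are admissible), a step you do not make explicit. The paper instead places the chain balls $B_k=B(\gamma_z(t_k),r_k)$, $B_{-k}=B(\gamma_y(t_{-k}),r_{-k})$ \emph{along the two John curves} with geometric ratio $1-\tfrac{1}{2c}$, so that all $\alpha B_k$ with $k\in\Z^*$ lie in $\Om$; the single boundary-centered ball $B_0=B(z,3d(y,z))$ at the top is then absorbed into $\alpha B_1$ via the auxiliary function $g=(g_u^{\widehat q}+M_\Om(\chi_{D(z_0)}g_u^{\widehat q}))^{1/\widehat q}$, and the $L^{q/\widehat q}$-boundedness of $M_\Om$ is invoked only once at the very end. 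This sidesteps entirely the problem of producing a common interior corkscrew point for $\zeta$ and $\eta$. The paper also works directly with the integral Besov seminorm rather than a dyadic characterization (which matters since $\nu_F$ need not be doubling; the direction you need is fine, but this should be said), and carries out the double integration by a Fubini-plus-cone argument rather than a discrete Hardy inequality. Both executions yield the same estimate with constants independent of $z_0$.
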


See Definition~\ref{def:N1p+Btheta-p} for the definition of the relevant notions used in the above theorem, 
and~\eqref{eq:Thrace} for the explicit construction of the trace operator.
This theorem will be proved in Section~\ref{Sec:5}.

 As a consequence of Theorem~\ref{thm:main}, should a domain $\Om$ satisfy the hypotheses of
that theorem, then we can find a dense subset $E\subset\partial\Om$ and a countable decomposition $\{E_i\}_{i\in\N}$ of $E$
with each $E_i$ arising as the set $P_\infty$ given by Theorem~\ref{thm:main} for some point $z_0\in\Om$, and a Radon measure
$\nu_i$ on $E_i$ such that the trace $T$ of $N^{1,p}(\Om)$ to $E$ makes sense, though the trace function might not lie in a
global Besov class of functions on $E$. For Sobolev-type functions on the entire space $X$, similar traces to sets $S\subset X$
with a countable decomposition $\{S_i\}_{i\in\N}$ of $S$ into lower Ahlfors regular subsets of $S$ was explored in the
interesting papers~\cite{Tyu1, Tyu2}.

\subsection{Structure of the paper}

The paper is structured as follows. In Section~\ref{Sec:2},
we will discuss the background notation and preliminary results 
that are valuable tools in proving the two theorems mentioned above, and in
Section~\ref{Sec:3} we will construct the set $P_\infty$ and the Frostman measure $\nu_F$ supported on $P_\infty$.
Theorem~\ref{thm:main} is
then proved in Section~\ref{Sec:4}. A discussion of traces is provided, together with the proof of
Theorem~\ref{thm:main2}, in Section~\ref{Sec:5}. Section~\ref{Sec:6} contains some concluding remarks,
completing the paper.
		
\medskip
		
\noindent {\bf Acknowledgement:} 
Nageswari Shanmugalingam's work is partially supported by the NSF (U.S.A.) grant DMS~\#2054960. Sylvester Eriksson-Bique's 
work was partially supported by the Finnish Academy grant no. 356861. Riikka Korte's work was partially supported by the 
Finnish Academy grant no. 360184. Part of the research was conducted during S.~E.-B.'s
visit to the University of Cincinnati in December 2022, and he wishes to thank that institution for its kind hospitality.
The authors would also like to thank the anonymous referees for their input and corrections, which lead to the 
simplification of some of the arguments in this paper.

\section{Preliminaries}\label{Sec:2}

In this section, we provide definitions of notions used throughout this paper.
Here, $(X,d,\mu)$ is a complete metric measure space satisfying $0<\mu(B)<\infty$ for all balls $B$ of positive and finite radius.
We will also assume throughout the paper that $X$ is a geodesic space. We point out here that this
is a very mild requirement when $(X,d,\mu)$ is complete, 
doubling and supports a Poincar\'e inequality, for then $X$ is necessarily quasiconvex
and hence, there is a bi-Lipschitz equivalent metric on $X$ that turns $X$ into a geodesic space. A ball of radius $r>0$ will be denoted
by $B(r)$ if the center of the ball is not of particular importance to us.
		
\begin{definition}
We say that a domain $\Omega\subset X$ is \emph{c-John} for $c\geq 1$ with center $z_0$ if 
for each $y\in\Om$ with $y\neq z_0$ there exists a curve $\gamma$ with end points $z_0$ and $y$ such that
\[
\ell(\gamma_{z,y})\leq c\, d_\Omega(z)
\]
for all $z$ along $\gamma$. Such a curve is called a $c$-John curve with respect to $\Om$. 
Here, $\gamma_{z,y}$ denotes a subcurve of $\gamma$ with end points $z$ and $y$, and 
$d_\Omega(z):=d(z,X\setminus\Omega):=\inf\{d(z,\zeta)\, :\, \zeta\in X\setminus\Om\}$.
\end{definition}
		
It follows from the definition of a John domain that $\Omega$ is necessarily bounded. 
Moreover, points on the boundary of a John domain can also be reached from the center by a John curve. 
It is immediate from the definition of John domains and their centers that if $X$ is a geodesic
space and $\Om\subset X$ is a domain, $z_0\in\Om$, and $\Om_\tau$, $\tau\in\Lambda$, 
is a collection of $c$-John domains with
the same John center $z_0$, then $\bigcup_{\tau\in\Lambda}\Om_\tau$ is also $c$-John with the same John center. 
Given this, we know that
for $z_0\in\Om$ and $c\ge 1$ there is the largest subdomain, $\Om_{z_0}(c)$, of $\Om$, such that $\Om_{z_0}(c)$ is 
$c$-John with center $z_0$. 
Moreover, with $r=d_\Om(z_0)$, we have that $B(z_0,r)\subset\Om$ with $B(z_0,r)$ a $c$-John domain with John
center $z_0$ whenever $c\ge 1$. 
		
\begin{definition} 
Let $\Om$ be a domain in $X$.
For $c\ge 1$ and $z_0\in\Om$, we say that $\partial\Omega_{z_0}(c)\cap\partial\Omega$ is
the \emph{visible, or accessible, boundary} of $\Om$ near 
$z_0\in\Omega$. 		
\end{definition}
		
\begin{definition}\label{doubling}
We say that $\mu$ is a \emph{doubling measure} if there exists a constant $C_d\geq 1$ such that for all
$x\in X$ and $r>0$ we have
\[
\mu(B(x,2r))\le C_d\, \mu(B(x,r)).
\]
\end{definition}

\begin{definition} \label{def:codim-Haus}
For $t\geq 0$ and $R>0$, the {\emph{$t$--codimensional Hausdorff content}} of a set $A\subset X$ is defined as 
\[
\mathcal{H}^{-t}_{R}(A)
=\inf\left\{\sum_{i=1}^{\infty}\frac{\mu(B(x_i,r_i))}{r_i^t}\,:\,A\subset\bigcup_{i=1}^{\infty}B(x_i,r_i),\,r_i\leq R \right\}.
\]
The {\emph{$t$--codimensional Hausdorff measure} of $A$ is 
$\mathcal{H}^{-t}(A):=\lim_{R\to 0^+}\mathcal{H}^{-t}_R(A)$.}
\end{definition}

Recall that the Hausdorff dimension of a set $A$ in a metric space is the supremum of all $s\ge 0$ for which 
$\mathcal{H}^s_\infty(A)=\infty$, where $\mathcal{H}^s_\infty$ is the dimension $s$ Hausdorff content,
or equivalently, the supremum of all $s\ge 0$ for which $\mathcal{H}^s(A)>0$. Here, 
$\mathcal{H}^s(A):=\lim_{R\rightarrow 0^+}\mathcal{H}^s_R(A)$ is the usual $s$-dimensional 
Hausdorff measure. Unlike
the Hausdorff measures, Hausdorff content $\mathcal{H}^s_\infty(A):=\lim_{R\rightarrow \infty}\mathcal{H}^s_R(A)$ of 
a bounded set is never infinite; however, $\mathcal{H}^s(A)=0$ if and only if
$\mathcal{H}^s_\infty(A)=0$. When considering dimensional notions using codimensional measures, we 
have the following.
		
\begin{lemma} \label{lem:co-dim-change-exp}
Let $A\subset X$ be a bounded set and let $t>0$. Then for all $\tau\ge t$ we have that 
\[\rho^{\tau-t}\Hcali{\rho}^{-\tau}(A)\geq\Hcali{\rho}^{-t}(A).\] 
\end{lemma}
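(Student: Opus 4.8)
The plan is to compare the two contents cover by cover, exploiting that both are evaluated at the \emph{same} scale $\rho$. First I would fix an arbitrary admissible cover $\{B(x_i,r_i)\}_{i\in\N}$ of $A$ by balls with $r_i\le\rho$; if no such cover exists, or if $\Hcali{\rho}^{-\tau}(A)=\infty$, then the asserted inequality is trivial, and it is here that the boundedness of $A$ is used, namely to keep the quantities finite so that the statement is meaningful. For each index $i$, since $\tau\ge t$ and $0<r_i\le\rho$, we have $r_i^{\tau-t}\le\rho^{\tau-t}$, and therefore
\[
\frac{\mu(B(x_i,r_i))}{r_i^{t}}=r_i^{\tau-t}\,\frac{\mu(B(x_i,r_i))}{r_i^{\tau}}\le\rho^{\tau-t}\,\frac{\mu(B(x_i,r_i))}{r_i^{\tau}}.
\]
Summing over $i\in\N$ gives $\sum_i \mu(B(x_i,r_i))\,r_i^{-t}\le\rho^{\tau-t}\sum_i \mu(B(x_i,r_i))\,r_i^{-\tau}$.

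Next I would pass to infima. Since $\{B(x_i,r_i)\}_i$ is an admissible cover of $A$ for the definition of $\Hcali{\rho}^{-t}$, the left-hand side of the last inequality dominates $\Hcali{\rho}^{-t}(A)$; hence
\[
\Hcali{\rho}^{-t}(A)\le\rho^{\tau-t}\sum_{i}\frac{\mu(B(x_i,r_i))}{r_i^{\tau}}
\]
for every admissible cover $\{B(x_i,r_i)\}_i$. Taking the infimum of the right-hand side over all such covers yields $\Hcali{\rho}^{-t}(A)\le\rho^{\tau-t}\,\Hcali{\rho}^{-\tau}(A)$, which is exactly the claim.

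There is no genuine obstacle in this argument: it is a term-by-term estimate followed by two applications of the definition of the content. The only points worth flagging are that the estimate is special to contents taken at a common scale — the factor $\rho^{\tau-t}$ arises precisely from the uniform bound $r_i\le\rho$ on the radii of the covering balls, and no such clean comparison is available between $\Hcali{\rho}$ and $\Hcali{\infty}$ in general — and that the degenerate cases (absence of an admissible cover, or infinite $(-\tau)$-content) must be noted separately, though they are immediate.
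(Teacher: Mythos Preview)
Your proof is correct and follows essentially the same approach as the paper's: fix an admissible cover, use $r_i^{\tau-t}\le\rho^{\tau-t}$ term by term, and then take the infimum over covers. The paper's version is slightly more terse and omits the discussion of degenerate cases, but the argument is identical.
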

		
\begin{proof}
With $\{B_i\}_{i\in I\subset\N}$ a cover of $A$ by balls of radii $\rad(B_i)\le \rho$, we have that 
$\rho^{\tau-t}\, \rad(B_i)^t\ge \rad(B_i)^\tau$. Therefore,
\[
\Hcali{\rho}^{-t}(A)\le \sum_{i\in I}\frac{\mu(B_i)}{\rad(B_i)^t}\le \rho^{\tau-t}\sum_{i\in I}\frac{\mu(B_i)}{\rad(B_i)^\tau},
\]
and taking the infimum over all such covers yields that $\Hcali{\rho}^{-t}(A)\le \rho^{\tau-t}\, \Hcali{\rho}^{-\tau}(A)$.
\end{proof}
		
Unlike in the setting of Ahlfors regular
metric spaces equipped with the Ahlfors regular measure, in the doubling setting it is not necessarily true that when 
$\mathcal{H}^{-t}_\infty(A)=0$ we have $\mathcal{H}^{-t}(A)=0$, for large-scale behavior of $\mu(B(x,R))/R^t$, $x\in X$,
is not governed by the behavior of this ratio for small or intermediate values of $R$. It is therefore more instructive to
consider the scaled Hausdorff codimensional contents $\Hcali{R}^{-t}$, $R>0$. The next lemma gives a partial control
over how the change in the scale $R$ affects the codimensional Hausdorff content. 
		
\begin{lemma}\label{lem:Hausdorff-scale}
Let $\alpha>\rho$ and $t>0$. Then, whenever $K\subset X$ and $\{B_i\}_{i\in I\subset\N}$
is a cover of $K$ by balls $B_i$ of radii $r_i\le \alpha$, we have
\[
\Hcali{\rho}^{-t}(K)\le C\left(\frac{\alpha}{\rho}\right)^t\, \sum_{i\in I}\frac{\mu(B_i)}{r_i^t}.
\]
Here $C$ depends only on the doubling constant $C_d$.
\end{lemma}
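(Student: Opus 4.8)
The plan is to refine the given cover $\{B_i\}_{i\in I}$ of $K$ into a cover admissible for $\Hcali{\rho}^{-t}$ — that is, one by balls of radius at most $\rho$ — whose co-dimensional sum is at most $C_d^{2}(\alpha/\rho)^{t}$ times the original sum $\sum_{i\in I}\mu(B_i)/r_i^{t}$. Since $\Hcali{\rho}^{-t}(K)$ is the infimum over such covers, this immediately yields the assertion with $C=C_d^{2}$. I split the index set $I$ according to whether $r_i\le\rho$ or $r_i>\rho$. For the balls with $r_i\le\rho$ there is nothing to do: $B_i$ is already admissible, and its term satisfies $\mu(B_i)/r_i^{t}\le(\alpha/\rho)^{t}\,\mu(B_i)/r_i^{t}$ since $\alpha>\rho$ and $t>0$.

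The only real work is handling a ball $B_i=B(x_i,r_i)$ with $r_i>\rho$. I would choose a maximal $\rho$-separated set $\{x_{i,1},\dots,x_{i,M_i}\}\subset B_i$, which is finite because doubling makes $B_i$ totally bounded; by maximality the balls $B(x_{i,j},\rho)$ cover $B_i$, and each has radius $\rho$, hence is admissible. To estimate their contribution I use the standard disjointification: the half-balls $B(x_{i,j},\rho/2)$ are pairwise disjoint, and since $\rho<r_i$ they are all contained in $B(x_i,2r_i)$; together with doubling this gives
\[
\sum_{j=1}^{M_i}\mu\bigl(B(x_{i,j},\rho)\bigr)\le C_d\sum_{j=1}^{M_i}\mu\bigl(B(x_{i,j},\rho/2)\bigr)\le C_d\,\mu\bigl(B(x_i,2r_i)\bigr)\le C_d^{2}\,\mu(B_i).
\]
Dividing by $\rho^{t}$ and writing $\mu(B_i)/\rho^{t}=(r_i/\rho)^{t}\,\mu(B_i)/r_i^{t}\le(\alpha/\rho)^{t}\,\mu(B_i)/r_i^{t}$ (using $r_i\le\alpha$), the contribution of the refinement of $B_i$ is bounded by $C_d^{2}(\alpha/\rho)^{t}\,\mu(B_i)/r_i^{t}$.

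Summing over all $i\in I$, the collection consisting of the original balls with $r_i\le\rho$ together with all the balls $B(x_{i,j},\rho)$ arising from the $i$ with $r_i>\rho$ is a cover of $K$ by balls of radius at most $\rho$ whose co-dimensional sum is at most $C_d^{2}(\alpha/\rho)^{t}\sum_{i\in I}\mu(B_i)/r_i^{t}$ (the factors $\alpha/\rho>1$ and $C_d\ge 1$ absorb the $r_i\le\rho$ terms); taking the infimum over admissible covers on the left gives the lemma with $C=C_d^{2}$. I do not anticipate a genuine obstacle, as this is a routine covering-and-doubling computation; the only points requiring care are verifying the inclusion $B(x_{i,j},\rho/2)\subset B(x_i,2r_i)$ in the regime $\rho<r_i$, and remembering that the balls with $r_i\le\rho$ need no subdivision, so that the final constant depends on $C_d$ alone.
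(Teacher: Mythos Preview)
Your proof is correct and follows essentially the same approach as the paper: split the cover into balls with $r_i\le\rho$ (kept as is) and balls with $r_i>\rho$ (refined by balls of radius $\rho$ with bounded overlap inside $2B_i$), then use doubling to control the refined sum. The paper phrases the refinement step in terms of a bounded-overlap cover of $B_i$ contained in $2B_i$, while you make this explicit via a maximal $\rho$-separated net and disjoint half-balls; these are equivalent, and your version has the minor bonus of tracking the constant as $C_d^2$.
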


\begin{proof}
For a cover $\{B_i\}_{i\in I\subset\N}$ of
$K$ comprised of balls $B_i$ of radius $r_i\le \alpha$, we can decompose this cover into two subfamilies: 
the first subfamily
$\{B_i\}_{i\in I_1\subset I}$ of all the balls $B_i$ with radius $r_i\le \rho$, 
and the other subfamily
$\{B_i\}_{i\in I_2\subset I}$  with $I_2=I\setminus I_1$ consisting
of balls  of radius $r_i\ge\rho$. For each ball $B_i$ with $i\in I_2$, we can find
a collection of balls $\{B_{i,j}\}_{j\in I(i)\subset \N}$ contained in $2B_i$ with radius $\rho$ 
so that they have bounded overlap (with the overlap 
bounded by a constant that depends solely on the doubling constant of $\mu$) and cover $B_i$. For each $i\in I_2$, then, we have 
\[
\sum_{j\in I(i)}\frac{\mu(B_{i,j})}{\rho^t}\le C\, \frac{\mu(2B_i)}{\rho^t}
\le C\, \left(\frac{\alpha}{\rho}\right)^t\, \frac{\mu(B_i)}{r_i^t}.
\]
The collection $\{B_i\}_{i\in I_1\subset I}\cup\{B_{i,j}\}_{i\in I_2,j\in I(i)}$ is then a cover of $K$ comprised of balls with radii at most $\rho$, and so 
\[
\Hcali{\rho}^{-t}(K)\leq \sum_{i\in I_1}\frac{\mu(B_i)}{r_i^t}+C\, \left(\frac{\alpha}{\rho}\right)^t\sum_{i\in I_2}\frac{\mu(B_i)}{r_i^t}\leq C\, \left(\frac{\alpha}{\rho}\right)^t \sum _{i\in I} \frac{\mu(B_i)}{r_i^t}. \qedhere
\]
\end{proof}

For a locally Lipschitz function $u$ on $X$, for $x\in X$ we set
\[
\Lip(u)(x):=\limsup_{r\rightarrow 0^+}\sup_{y\in B(x,r)}\frac{|u(x)-u(y)|}{d(x,y)}. 
\]
		
\begin{definition}\label{Poin}
We say that $X$ supports a \emph{$p$-Poincar\'e inequality}, $1\leq p<\infty$, if there exist 
constants $C>0$ and $\kappa\ge 1$ such that for all $x\in X$ and $r>0$ we have
\[
	\fint_{B(x,r)}\!|u-u_{B(x,r)}|\, d\mu\le C\, r\, \left(\fint_{B(x,\kappa r)}\!\Lip(u)^p\, d\mu\right)^{1/p}
\]
for all locally Lipschitz functions $u$, where 
\[
u_{B(x,r)}:=\fint_{B(x,r)}\!u\, d\mu:=\frac{1}{\mu(B(x,r))}\int_{B(x,r)}\!u\, d\mu.
\]
\end{definition}
		
On a complete metric measure space, the $p$-Poincar\'e inequality for $1<p<\infty$ exhibits self-improvement. 
It follows from a theorem of Keith and Zhong, see~\cite{E-B,KZ}, that the space also satisfies a 
$q$-Poincar\'e inequality for some $1\leq q<p$. 

\begin{definition}\label{def:N1p+Btheta-p}
Given a metric measure space $(Y,d_Y,\mu_Y)$ with $\mu_Y$ Borel, we say that a function $u:Y\to[-\infty,\infty]$
is in the Newton-Sobolev class $N^{1,q}(Y)$ if $u$ is measurable, $\int_Y|u|^q\, d\mu<\infty$, 
and there is an upper gradient $g\in L^q(Y)$; that is,
$g$ is a non-negative Borel function such that 
for every non-constant compact rectifiable curve $\gamma$ in $Y$, we have 
\[
|u(y)-u(x)|\le \int_\gamma g\, ds,
\]
where $x$ and $y$ denote the end points of $\gamma$ in $Y$. The norm on $N^{1,q}(Y)$ is given by
\[
\Vert u\Vert_{N^{1,q}(Y)}:=\left(\int_Y|u|^q\, d\mu_Y +\inf_g\int_Yg^q\, d\mu_Y\right)^{1/q},
\]
with the infimum over all upper gradients  $g$ of $u$.
	
For $S\ge 1$ we say that $u\in B^\theta_{q,q,S}(Y,d_Y,\mu_Y)$ if the nonlocal energy (semi)norm, given by
\[
\|u\|_{B^\theta_{q,q,S}(Y,d_Y,\mu_Y)}^q:=
\int_Y\int_Y\frac{|u(y)-u(z)|^q}{d_Y(y,z)^{\theta q}\, \mu_Y(B(z,S\, d_Y(y,z)))}\, d\mu_Y(y)\, d\mu_Y(z)
\]
is finite. 
\end{definition}

Note that when the measure $\mu_Y$ is doubling, 
the spaces $B^\theta_{q,q,S}(Y,d_Y,\mu_Y)$ are independent of the parameter $S$,
with the energy seminorms comparable with comparison constant depending solely on the doubling constant of $\mu_Y$ and
the parameter $S$. In this case,
we can set $S=1$ in the above definition to obtain the classical Besov class 
$B^\theta_{q,q}(Y,d_Y,\mu_Y):=B^\theta_{q,q,1}(Y,d_Y,\mu_Y)$, with equivalent norms. However, in
our setting, the measure $\mu_Y$ is the \emph{upper} $p$-codimensional Hausdorff measure 
$\nu_F$ on a compact subset $P_\infty$ of the
boundary of the domain, and we do not know that this measure is doubling. 
For nondoubling measures 
$\mu_Y$,  changing the parameter $S$
could result in non-controllable changes in the Besov class and in the associated energy.
	
Besov spaces arise naturally in the study of potential theory, especially in understanding the 
class of boundary data for which solutions to the $p$-energy minimizing problem (equivalent to
the $p$-Laplacian problem in Euclidean, Riemannian, and Carnot-Carath\'eodory settings) 
exist with the corresponding boundary data.
Of such boundary data, those that give rise to solutions that have globally finite 
$p$-energy correspond precisely to traces of Newton-Sobolev functions on the domain.
We refer interested readers to~\cite{BBS, CKKSS, GKS, GS2, GoKS} for more information on 
Besov spaces as the trace class of Sobolev spaces.

\vskip .3cm
		
\noindent{\bf Standing assumptions:} 
\emph{Throughout this paper, apart from Section~\ref{Sec:5}, we assume that $(X,d,\mu)$ is a complete 
metric measure space, that $\mu$ is a doubling measure, 
and that it supports a $p$-Poincar\'e inequality for some $1<p<\infty$. In this setting, $(X,d)$ is proper and $d$ is bi-Lipschitz 
equivalent to a geodesic metric, see~\cite[Lemma~4.1.14, Corollary~8.3.16]{HKSTbook}. 
We take $d$ to be this geodesic metric, which implies in 
turn that one may take the Poincar\'e scaling factor $\kappa$ to equal $1$ (see~\cite[Remark~9.1.19]{HKSTbook}); 
this is crucial for us only in the proofs of Lemmas~\ref{lem:Loewner} and~\ref{lem:not-counting} 
below. Moreover, the parameter
$q$ will be used to denote the self-improved Poincar\'e inequality 
exponent, i.e.~$q$-Poincar\'e inequality, guaranteed by~\cite{KZ}.}

\section{Construction of $P_\infty$ and the Frostman measure $\nu_F$}\label{Sec:3}
		
Recall that $(X,d,\mu)$ is complete, doubling, and supports a $p$-Poincar\'e inequality for some $p>1$, and $0<t<p$. 
Thus by the self-improvement property due to~\cite{KZ}, we can find some $q\ge 1$ with $t<q<p$ such that $X$ also 
supports a $q$-Poincar\'e inequality.
		
Replacement for~\cite[Lemma~4.3]{GK} is the following, and its proof is an adaptation of~\cite[Theorem~5.9]{HK}.
We provide the complete adapted proof here, especially since we do not require the sets $E$ and
$F$ to be compact. 
		
\begin{lemma}\label{lem:Loewner}
In the setting of Theorem~\ref{thm:main}, 
fix $1<q<p$ such that $X$ supports a $q$-Poincar\'e inequality.
Let $0<t<q$. 
Then there is a constant $C\ge 1$ such that if $r>0$, $E$ and $F$ are 
subsets of a ball $B(x,r)\subset X$, and $\lambda>0$ satisfies
\[
\min\{\Hcali{r}^{-t}(E), \Hcali{r}^{-t}(F)\}\ge \lambda \frac{\mu(B(x,r))}{r^t}, 
\]
then for any Lipschitz function $u$ on $B(x,r)$ with $u=1$ on $E$ and $u=0$ on $F$, we have that 
\[
\int_{B(x,r)}(\Lip u)^q\, d\mu\ge \frac{\lambda}{C} \frac{\mu(B(x,r))}{r^q}.
\]
\end{lemma}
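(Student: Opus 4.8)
The plan is to run a modulus-of-curve-families argument, of the sort used to prove the Loewner property, but phrased entirely in terms of the $q$-Poincar\'e inequality via the standard telescoping estimate rather than via connectedness of $E$ and $F$. The key point is that a large $t$-codimensional content of $E$ and of $F$ forces a substantial family of curves joining them through $B(x,r)$ to have large $q$-modulus, and any Lipschitz $u$ with $u=1$ on $E$ and $u=0$ on $F$ has $\Lip u$ as an admissible density for that family; integrating gives the lower bound. Concretely, first I would fix $u$ as in the statement and let $g=\Lip u$, which (for Lipschitz $u$) is an upper gradient of $u$ on $B(x,r)$. For $x\in E$ and $y\in F$, the $q$-Poincar\'e inequality together with a chaining/telescoping argument along a chain of balls of comparable radii from $x$ to $y$ (all contained in a fixed dilate $B(x,\Lambda r)$ for some $\Lambda$ depending only on the data, using properness and the geodesic assumption so that $\kappa=1$) yields a pointwise estimate of the form
\[
1=|u(x)-u(y)|\lesssim r\left(M_{\Lambda r}(g^q)(x)^{1/q}+M_{\Lambda r}(g^q)(y)^{1/q}\right),
\]
where $M_{R}$ denotes the restricted maximal function over balls of radius at most $R$; this is the classical ``MECp-type'' estimate extracted from the Poincar\'e inequality and valid for Lebesgue points, hence $\mu$-a.e.\ point.

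\medskip

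Next I would convert the content hypotheses into a measure-theoretic statement. Set $h:=M_{\Lambda r}(g^q)$. The pointwise inequality above says that at each pair $(x,y)\in E\times F$ (away from a null set), at least one of $h(x)^{1/q}$, $h(y)^{1/q}$ is $\gtrsim 1/r$, i.e.\ $\ge \delta/r$ for a dimensional constant $\delta>0$. Consider the ``bad'' superlevel set $U:=\{z\in B(x,\Lambda r): h(z)> (\delta/r)^q\}$. If $z\in E\setminus U$ then for \emph{every} $y\in F$ (a.e.) we must have $h(y)^{1/q}\ge \delta/r$, which would force $F\subset U$ up to a null set; symmetrically, either $E$ or $F$ is covered by $U$ up to a null set. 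Without loss of generality say $F\setminus U$ is null. But the weak $(1,1)$ / Vitali-covering structure of the restricted maximal operator gives
\[
\Hcali{\,C'r}^{-1}\big(\{h>s\}\big)\;\lesssim\;\frac{1}{s\,r^{\,q-1}}\int_{B(x,\Lambda r)} g^q\,d\mu,
\]
which after adjusting the scale from $C'r$ down to $r$ via Lemma~\ref{lem:Hausdorff-scale} (costing only a dimensional factor) and using Lemma~\ref{lem:co-dim-change-exp} to pass from the $(-1)$-content to the $(-t)$-content (costing a factor $r^{1-t}$, permissible since $t<1$ or, more to the point, since we only need $t<q$ and will accommodate a general $t$ by the same lemma) produces
\[
\Hcali{r}^{-t}(U)\;\lesssim\;\frac{r^q}{\mu(B(x,r))}\cdot\frac{\mu(B(x,r))}{r^t}\cdot\frac{1}{\mu(B(x,r))}\int_{B(x,\Lambda r)}g^q\,d\mu
\]
up to a constant depending only on $C_d$ and the Poincar\'e data. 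I will need to be a little careful here: the clean way is to bound $\Hcali{r}^{-t}(U)$ directly by $C\,r^{q-t}\,\mu(B(x,r))^{-1}\int g^q\,d\mu$, which is the natural codimensional endpoint estimate for the maximal function, provable by a $5r$-covering argument applied to balls realizing $h>s$ at scale $r$.

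\medskip

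Finally I would combine: since $F\subset U$ up to a null set, monotonicity of content and $\Hcali{r}^{-t}(F)\ge \lambda\,\mu(B(x,r))/r^t$ give
\[
\lambda\,\frac{\mu(B(x,r))}{r^t}\;\le\;\Hcali{r}^{-t}(U)\;\le\;C\,\frac{r^{q-t}}{\mu(B(x,r))}\int_{B(x,\Lambda r)}g^q\,d\mu,
\]
and therefore $\int_{B(x,\Lambda r)} g^q\,d\mu\ge (\lambda/C)\,\mu(B(x,r))/r^{q-2t}\cdot$—wait, one must keep the bookkeeping honest: rearranging gives $\int_{B(x,\Lambda r)} g^q\,d\mu \ge (\lambda/C)\,\mu(B(x,r))^2 r^{-t}\big/(r^{q-t}\mu(B(x,r)))\cdot$; the $\mu$-factors and $r$-powers should collapse to exactly $\int g^q\,d\mu\ge (\lambda/C)\,\mu(B(x,r))/r^q$ once the maximal-function estimate is stated with the correct homogeneity, namely $\Hcali{r}^{-t}(\{h>s\})\le C s^{-1} r^{-(q-t)}\cdot\mu(B(x,r))^{-1}\int g^q\,d\mu \cdot \mu(B(x,r))/\mu(B(x,r))$—i.e.\ the endpoint estimate should read $\Hcali{r}^{-t}(\{h>s\})\le C\,(s r^{q-t})^{-1}\int g^q\,d\mu$ with the right power, so that plugging $s\sim r^{-q}$ yields $\Hcali{r}^{-t}(\{h>s\})\le C r^{t}\int g^q\,d\mu \cdot r^{-(q) }\cdot$; the safe approach in the writeup is to derive the maximal estimate in the normalized form and simply track constants. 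Then one enlarges the domain of integration back: since $g=\Lip u$ is only controlled on $B(x,r)$, I should arrange the chaining so that all chain-balls lie in $B(x,r)$ itself—this is possible by choosing the chain of balls shrinking geometrically toward $x$ and toward $y$ and exploiting that $E,F\subset B(x,r)$ with $r$ the radius, using a Whitney-type chain adapted to $B(x,r)$; the Poincar\'e inequality with $\kappa=1$ and the geodesic metric is exactly what lets the chain stay inside a controlled dilate, and one absorbs the dilate by the doubling property. I expect the main obstacle to be precisely this localization—ensuring the maximal function and the chaining only see $g$ on $B(x,r)$ (not on a larger ball) while keeping all constants dependent only on $C_d$, $p$, $q$, $t$ and the Poincar\'e constants—together with getting the exponent bookkeeping in the codimensional maximal-function estimate exactly right so the final homogeneity matches the claimed $\mu(B(x,r))/r^q$.
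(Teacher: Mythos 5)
Your overall strategy (telescoping from the $q$-Poincar\'e inequality, then a covering argument converting the content hypothesis into an energy bound) is in the right spirit, but the pivotal quantitative step is false as stated: there is no weak-type estimate for the \emph{plain} restricted maximal function with respect to codimensional content. If $h=M_{\Lambda r}(g^q)$ and $z\in\{h>s\}$, the witnessing ball $B(z,\rho_z)$ gives $\mu(B(z,\rho_z))<s^{-1}\int_{B(z,\rho_z)}g^q\,d\mu$, and after the $5r$-covering step you are left with $\sum_i \rho_i^{-t}\,s^{-1}\int_{B_i}g^q\,d\mu$, where the radii $\rho_i$ can be arbitrarily small, so the factor $\rho_i^{-t}$ is uncontrolled. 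Concretely, take $g^q=A\chi_{B(x_0,\epsilon)}$ with $A$ just above $s\approx r^{-q}$ and $\epsilon\ll r$: the superlevel set contains $B(x_0,\epsilon)$, whose $\Hcali{r}^{-t}$-content is about $\mu(B(x_0,\epsilon))/\epsilon^t$, which is far larger than $r^{q-t}\int g^q\,d\mu\approx r^{-t}\mu(B(x_0,\epsilon))$. So the homogeneity trouble you ran into is not bookkeeping; the estimate cannot hold. What the covering argument does prove is a weak-type bound for the codimension-$t$ \emph{fractional} maximal function $M^t h(z):=\sup_{0<\rho\le\Lambda r}\rho^t\fint_{B(z,\rho)}h\,d\mu$, namely $\Hcali{\Lambda r}^{-t}(\{M^t h>s\})\lesssim s^{-1}\int h\,d\mu$, because $\rho^t$ exactly cancels the $\rho^{-t}$ in the content. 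To feed this you must reorganize the telescoping sum as $2^{-k}r\bigl(\fint_{2B_k}g^q\,d\mu\bigr)^{1/q}\le (2^{-k}r)^{1-t/q}\bigl(M^t(g^q)(z)\bigr)^{1/q}$ and use $t<q$ so that $\sum_k(2^{-k}r)^{1-t/q}\approx r^{1-t/q}$, obtaining $M^t(g^q)(z)\gtrsim r^{t-q}$ on the relevant set, and then apply the fractional weak-type bound. At that point your argument is the paper's proof in different clothing: the paper's selection of a good index $k_z$ via the convergent series $\sum_k 2^{-k\beta}$ with $\beta=1-t/q$ is exactly the extraction of $M^t(g^q)$, and its $5$-covering step combined with Lemma~\ref{lem:Hausdorff-scale} is the weak-type estimate.

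Three further points need repair. First, do not phrase the pointwise estimate ``$\mu$-a.e.'': in the intended application $F$ is a piece of $\partial\Om$, which can be $\mu$-null while having positive codimensional content, so ``$F\subset U$ up to a $\mu$-null set'' gives no control on $\Hcali{r}^{-t}(F)$; since $u$ is Lipschitz, hence continuous, the chaining estimate holds at \emph{every} point and the dichotomy must be stated pointwise. Second, a two-point chain between $z\in E$ and $w\in F$ requires balls of radius comparable to $d(z,w)$, which need not lie in $B(x,r)$, the only region where $\Lip u$ is controlled; the paper sidesteps this by chaining each point to the center of $B(x,r)$ and comparing with $u_{B(x,r)}$ (splitting into the cases $u_{B(x,r)}\le\tfrac12$ and $u_{B(x,r)}\ge\tfrac12$, which also replaces your superlevel-set dichotomy), so that all chain balls stay inside $B(x,r)$. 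Third, the suggested passage from $\Hcali{r}^{-1}$ to $\Hcali{r}^{-t}$ via Lemma~\ref{lem:co-dim-change-exp} only goes in that direction when $t\le 1$, whereas here $t$ may exceed $1$; with the fractional maximal function in codimension $t$ this detour is unnecessary anyway.
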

		
\begin{proof} 
By replacing $u$ with $\min\{\max\{u,0\},1\}$ if necessary, we may assume that $0\le u\le 1$ on $B(x,r)$.
In the case that $u_{B(x,r)}\le \tfrac12$, 
for each $z\in E$ we have $|u(z)-u_{B(x,r)}|\ge \tfrac12$; in the case that 
$u_{B(x,r)}\ge \tfrac12$, for each $w\in F$ we have $|u(w)-u_{B(x,r)}|\ge \tfrac12$. 
We will address the first case, with the second case following {\it mutatis mutandis}.
			
We set $\beta:=1-\tfrac{t}{q}$, and note that as $t<q$, necessarily $\beta>0$.
For each $z\in E$ let $[x,z]$ be a geodesic segment with end points $x$ and $z$. 
If $d(x,z)\ge r/4$ we let $B_0:=B(x,r)$, $B_1=B(x,d(x,z)/2)$,
and let $x_2$ be the point on the segment $[x,z]$ that is a distance $d(x,z)/2$ from $x$. We set $B_2=B(x_2,d(x,z)/2^2)$.
Inductively, once we have chosen $B_i$, $i=0,1,\cdots, k$, we set $x_{k+1}$ to be the point on the segment $[x,z]$
that is at distance $2^{-j}d(x,z)$ from $z$. This implies that $d(x_{k+1},x)=\sum_{j=1}^k2^{-j}d(x,z)$. Now we set 
$B_{k+1}=B(x_{k+1},2^{-(k+1)}d(x,z))$. 
If $d(x,z)<r/4$ then we set $B_0:=B(x,r)$, $B_1:=B(z,r/2)$, and for positive integers $k$ we set $B_k=B(z,2^{-k}r)$.
			
Then note that for each 
positive integer $k$, we have $2B_k\subset B(x,r)$, 
$B_{k+1}\subset 2B_{k}$, and $z\in 3B_k$. Therefore, by the doubling property of $\mu$, we get
\begin{align*}
		\frac{1}{2}<|u(z)-u_{B(x,r)}|\le \sum_{k=0}^\infty |u_{B_k}-u_{B_{k+1}}|
				&\lesssim \sum_{k=0}^\infty \jint_{2B_k}|u-u_{2B_k}|\, d\mu\\
				&\lesssim \sum_{k=0}^\infty 2^{-k}r\left(\jint_{2B_k}(\Lip u)^q\, d\mu\right)^{1/q}.
\end{align*}
Since $\sum_{k=0}^\infty 2^{-k\beta}$ is a convergent series, it follows that there is a non-negative integer $k_z$ such that
\[
2^{-k_z\beta}\lesssim 2^{-k_z}r\left(\jint_{2B_{k_z}}(\Lip u)^q\, d\mu\right)^{1/q}.
\]
From our choice of $\beta$, we have that $t=(1-\beta)q$.  From the above inequality we obtain
\[
\mu(B_{k_z})\lesssim r^q 2^{-k_z(1-\beta)q}\int_{2B_{k_z}}(\Lip u)^q\, d\mu
			\lesssim r^{\beta q}\, \rad(B_{k_z})^t \, \int_{2B_{k_z}}(\Lip u)^q\, d\mu.
\]

The family $\{3B_{k_z}\, :\, z\in E\}$ forms a cover of $E$, and so by the
$5B$-covering lemma we can obtain a countable
pairwise disjoint subcollection $\{3B_k\}_{k\in I\subset\N}$ such that $\{15B_k\}_{k\in I\subset\N}$ also covers $E$.
It follows from Lemma~\ref{lem:Hausdorff-scale} that
\begin{align*}
\Hcali{r}^{-t}(E)\lesssim \sum_{k\in I}\frac{\mu(15B_k)}{(15\rad(B_k))^t}
	&\lesssim \sum_{k\in I}\frac{\mu(B_k)}{(\rad(B_k))^t}\\
	&\lesssim\sum_{k\in I} r^{\beta q}\int_{2B_k}(\Lip u)^q\, d\mu \le r^{\beta q}\int_{B(x,r)}(\Lip u)^q\, d\mu.
\end{align*}
By hypothesis, we know that $\mathcal{H}^{-t}_r(E)\ge \lambda r^{-t}\, \mu(B(x,r))$. As $-t-\beta q=-q$, the desired conclusion follows.
\end{proof}

\begin{definition}\label{def:well-placed}
Let $B$ be a ball in $X$ with center in 
$\overline{\Om}$. We say that a family $\{B_i\}_{i=1}^N$ of balls is well-placed
along $B\cap\partial\Om$ if for each $i=1,\cdots, N$ we have
that $B_i\subset\Om$, $(B\cap\partial\Om)\cap\partial B_i$ is non-empty, and
$4B_i\cap 4B_j$ is empty whenever $i\ne j$ with $i, j\in\{1,\ldots, N\}$. 
\end{definition}

A chain of balls is a collection $B_1,\dots, B_N$ such that $\rad(B_i)=\rad(B_1)$ and  $B_i\subset \Omega$ for all $i=1,\dots, N$ and the center of $B_{i+1}$ lies in $\overline{B_{i}}$ for $i=1,\dots, N-1$.

We say that a ball $B\subset \Omega $ is chainable to another ball $B' \subset \Omega$ if $\rad(B)=\rad(B')$ and there exists an $N\in \N$ and a chain of balls $B_1=B,\dots, B_N=B'$. We further say that $B$ is chainable to $B'$ within a set $Y$ if $B_i\subset Y$ for each $i=2,\dots, N-1$.

\begin{lemma}\label{lem:not-counting}
	Let 
	$w\in\partial\Om$, $0<r_0<\diam(\Om)$, and $0<\eta_0<\tfrac{1}{168}$. Suppose that for $\rho=(1-21\eta_0)r_0$ we have 
	\[
	\Hcali{r_0}^{-t}(B(w,\rho)\cap\partial\Om)\ge c_0\, \frac{\mu(B(w,\rho))}{\rho^t},
	\]
	and that $z\in\Om\cap\partial B(w,r_0)$ is such that $B(z,r_0/2)\subset\Om$. Let $\{B_i(\eta_0 r_0)\}_{i=1}^N$ be a maximal  
	collection of balls with centers in $B(w,r_0)\cap\Om$ s.t. each $B_i$ is chainable to $B(z,\eta_0 r_0)$ within 
	$B(w,2r_0)\cap\Om$ and $\{B_i(\eta_0 r_0)\}_{i=1}^N$ is well-placed along $B(w,2r_0) \cap \partial \Omega$. 
	Then
	\[
	\eta_0^q\, \mu(B(z,r_0))\le K_0\sum_{i=1}^N\mu(B_i(\eta_0 r_0)),
	\]
	with the constant $K_0$ dependent solely on the doubling and Poincar\'e constants of $X$.
\end{lemma}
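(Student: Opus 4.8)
\noindent\emph{Proof idea.}
Write $F:=B(w,\rho)\cap\partial\Om$ and let $q$ be the Keith--Zhong self-improved exponent, so that $t<q<p$ and $X$ supports a $q$-Poincar\'e inequality. The plan is to feed a test function adapted to the balls $B_i$ into the Loewner-type estimate of Lemma~\ref{lem:Loewner}. Fix the reference ball $B_\sharp:=B(z,2r)$; since $d(z,w)=r$ and $\rho<r$, it contains both $F$ and the interior ball $E:=B(z,r/8)\subset\Om$. By hypothesis $\Hcali{r}^{-t}(F)\ge c_0\,\mu(B(w,\rho))/\rho^t$, and passing from scale $r$ to scale $2r$ by Lemma~\ref{lem:Hausdorff-scale} (losing only a factor $C\,2^{t}$), together with doubling, gives $\Hcali{2r}^{-t}(F)\gtrsim c_0\,\mu(B_\sharp)/(2r)^t$; for the interior ball one has the trivial bound $\Hcali{2r}^{-t}(E)\ge\mu(E)/(2r)^t\gtrsim\mu(B_\sharp)/(2r)^t$ with constant independent of $c_0$. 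Hence Lemma~\ref{lem:Loewner} provides $\lambda_0=\lambda_0(C_d,c_0)>0$ such that \emph{every} Lipschitz $u$ on $B_\sharp$ with $u\equiv 1$ on $E$ and $u\equiv 0$ on $F$ satisfies
\[
\int_{B_\sharp}(\Lip u)^q\,d\mu\ \gtrsim\ c_0\,\frac{\mu(B(z,r))}{r^q}.
\]

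Next I would build such a $u$ whose gradient is carried by bounded dilates of the $B_i$. Put $H:=\bigcup_{i=1}^N 2B_i(\eta r)$, let $W_z$ be the connected component of $\Om\setminus\overline H$ containing $z$ (open, since $X$ is locally connected), and set $u:=\min\{1,(\eta r)^{-1}\dist(\cdot,X\setminus W_z)\}$ on $W_z$ and $u:=0$ off $W_z$. Then $u$ is $(\eta r)^{-1}$-Lipschitz, and $u\equiv 0$ on $F$ because $F\subset\partial\Om\subset X\setminus W_z$. Using $B(z,r/2)\subset\Om$, the fact that $d(c_i,\partial\Om)=\eta r$ for each $i$, and the smallness of $\eta$, one checks $\dist(z,X\setminus W_z)>\eta r+r/8$, so $u\equiv 1$ on $E$. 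Moreover $\{\Lip u\neq 0\}$ lies in the $\eta r$-neighbourhood of $\partial W_z$, and $\partial W_z\subset\partial\Om\cup\overline H$; the part lying within $\eta r$ of $\overline H=\bigcup_i\overline{2B_i(\eta r)}$ automatically lies in $\bigcup_i 3B_i(\eta r)$.

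The crux is to show that the remaining part of $\{\Lip u\neq 0\}$ inside $B_\sharp$ — points $x$ within $\eta r$ of $\partial W_z\cap\partial\Om$ — also lies in $\bigcup_i C_1 B_i(\eta r)$ for a dilation constant $C_1$; this is where maximality, well-placedness and finite chainability all enter. Such an $x$ has $d(x,\partial\Om)<\eta r$ and is reachable from $z$ inside $W_z$. Following a curve in $W_z$ from (a point near) $x$ toward the deep point $z$, along which $d(\cdot,\partial\Om)$ passes through the value $\eta r$, one obtains a point $c'$ with $d(c',\partial\Om)=\eta r$, so that $B(c',\eta r)\subset\Om$ and $\partial B(c',\eta r)\cap\partial\Om\neq\emptyset$. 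If $x$, hence $c'$, were not within $C_1 B_i$ of any $B_i$, then $B(c',4\eta r)$ would be disjoint from every $B(c_i,4\eta r)$; and since $\Om$ is open and connected a chain of balls in $\Om$ joins $B(z,\eta r)$ to $B(c',\eta r)$, so $\{B_i\}_{i=1}^N\cup\{B(c',\eta r)\}$ again sits inside a finitely chainable collection containing $B(z,\eta r)$. Thus the enlarged family would be a strictly larger admissible collection of the required type, contradicting maximality. The bookkeeping here is delicate because one must keep $c'$ inside $B(w,r)$ and its boundary contact point inside $B(w,2r)$ — which is exactly what the quantitative hypotheses $\eta<\eta_1<\tfrac1{168}$ and $\rho=(1-21\eta)r$ are designed to secure (one first shrinks to a suitable sub-ball of $B(w,r)$ and isolates the relevant portion of the collar); the one genuinely subtle estimate is bounding the distance, \emph{within} $W_z$, from a collar point to the level set $\{z'\in\Om:d(z',\partial\Om)=\eta r\}$. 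This maximality/covering step is the main obstacle; everything else is routine.

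Granting it, $\{\Lip u\neq 0\}\cap B_\sharp\subset\bigcup_{i=1}^N C_1B_i(\eta r)$, whence, using $\Lip u\le(\eta r)^{-1}$ and doubling,
\[
\int_{B_\sharp}(\Lip u)^q\,d\mu\ \le\ \frac{1}{(\eta r)^q}\,\mu\!\Big(\bigcup_{i=1}^N C_1B_i(\eta r)\Big)\ \le\ \frac{1}{(\eta r)^q}\sum_{i=1}^N\mu\big(C_1B_i(\eta r)\big)\ \lesssim\ \frac{1}{(\eta r)^q}\sum_{i=1}^N\mu\big(B_i(\eta r)\big).
\]
Comparing with the lower bound of the first paragraph yields $c_0\,\mu(B(z,r))/r^q\lesssim(\eta r)^{-q}\sum_i\mu(B_i(\eta r))$, i.e.\ $\eta^q\,\mu(B(z,r))\le K_0\sum_{i}\mu(B_i(\eta r))$ with $K_0$ depending only on the doubling constant, the Poincar\'e data (through $q$ and the constant in Lemma~\ref{lem:Loewner}) and $c_0$, as claimed.
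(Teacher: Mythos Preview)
Your overall architecture is right and matches the paper: apply the Loewner-type Lemma~\ref{lem:Loewner} to a test function separating a deep interior piece from the boundary piece $B(w,\rho)\cap\partial\Om$, and arrange that $\Lip u$ is supported on bounded dilates of the $B_i$. The difficulty you flag as the ``crux'' is real, and your proposed resolution via the connected component $W_z$ of $\Om\setminus\overline H$ does not close it. The problem is that the point $c'$ you locate --- by following \emph{some} curve in $W_z$ from $x$ toward $z$ until $d(\cdot,\partial\Om)$ first hits $\eta r$ --- carries no a priori bound on $d(x,c')$; a path in $W_z$ may be arbitrarily long, so ``$x$ far from every $B_i$ $\Rightarrow$ $c'$ far from every $B_i$'' is unjustified, and there is no mechanism forcing $c'\in B(w,r)$ either. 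A related structural issue is your choice of reference ball $B_\sharp=B(z,2r)$: this extends well beyond $B(w,r)$, while the maximality hypothesis only concerns well-placed balls with centers in $B(w,r)$. Points $x\in B_\sharp$ with $d(x,w)>r$ and $d(x,\partial\Om)<\eta r$ can lie in the collar of $\partial W_z$ yet have no candidate $c'\in B(w,r)$ near them, so the desired inclusion $\{\Lip u\neq 0\}\cap B_\sharp\subset\bigcup_i C_1B_i$ genuinely fails in general.

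The paper avoids this by \emph{not} using a connected component of $\Om\setminus\overline H$. Instead it first enlarges $\{B_i\}\cup\{B(z,\eta r)\}$ to a \emph{maximal} finitely chainable collection $\mathcal B$ of $\eta r$-balls contained in $\Om$ with centers in $B(w,r)$, sets $D:=\bigcup_{B\in\mathcal B}10B$, and defines $f=g$ on $D$ (with $g$ a bump around $\bigcup_i 30B_i$) and $f\equiv 1$ on $B(w,r)\setminus D$. The Loewner lemma is then applied on the ball $B(w,(1-21\eta)r)$ centered at $w$, with $f=1$ on $F=\partial\Om\cap B(w,(1-21\eta)r)$ and $f=0$ on $E\subset B(z,r/4)$ (note the roles are reversed from yours). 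The point is that for $x\in\partial D\cap B(w,(1-21\eta)r)$ one has $x\in\partial(10B)$ for some $B\in\mathcal B$, so $x$ is automatically within $10\eta r$ of a chain center $x_B$; maximality of $\mathcal B$ then forces $\dist(x_B,\partial\Om)\le 11\eta r$, and walking along the short geodesic $[x_B,\zeta]$ (with $\zeta$ the nearest boundary point) produces $z'$ with $d(z',\partial\Om)=\eta r$ and $d(z',x)\le 20\eta r$. The well-placed maximality of $\{B_i\}$ now gives $x\in 30B_i$ for some $i$. This is exactly the control on $d(x,c')$ that your component-based construction cannot supply; the extra chain $\mathcal B$ is the missing ingredient, and working inside $B(w,(1-21\eta)r)$ rather than $B(z,2r)$ is what makes the $21\eta$ in the hypotheses relevant.
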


\begin{proof}
	We now set $D$ to be the collection of all points $x\in\Om$ with $d_\Om(x)> \eta_0r_0$ 
	for which $B(x,\eta_0 r_0)$ is chainable to $B(z,\eta_0r_0)$. 
	We consider the functions $g:X\to\R$ and $f:B(w,r_0)\to\R$  given by 
	\[
	g(x):=\max_{1\le i\le N}\left(1-\frac{\dist(x,30B_i)}{\eta_0 r_0}\right)_+
	\]
	and
	\[
	f(x)=\begin{cases} g(x) &\text{ if }x\in B(w,r_0)\cap D, \\ 
		1 &\text{ if }x\in B(w,r_0)\setminus D. \end{cases} 
	\]
	It is direct to see that $f$ is $\tfrac{1}{\eta_0 r_0}$-Lipschitz on $B(w,r_0)\cap D$, and that $f$ is constant 
	(and hence Lipschitz
	continuous) on $B(w,r_0)\setminus\overline{D}$. Thus, to show that $f$ is locally Lipschitz continuous 
	on $B(w,(1-21\eta_0)r_0)$, it suffices 
	to show that $f$ is locally Lipschitz continuous at each point $x$ in $B(w,(1-21\eta_0)r_0)\cap \partial D$.  It is direct to see 
	that $d_\Om(x)\geq \eta_0\, r_0$, since this estimate holds for all points in $D$. Since $x\in \partial D$ we have some $\tilde{x}\in D$ 
	such that $d(x,\tilde{x})< \eta_0 r_0$. Since $\tilde{x}\in D$, we have that $B(\tilde{x},\eta_0 r_0)$ is chainable to $B(z,\eta_0r_0)$ 
	within $B(w,2r_0)\cap\Om$. By adding $B(x,\eta_0 r_0)$ to the chain, we see that $B(x,\eta_0r_0)$ is also chainable to 
	$B(z,\eta_0 r_0)$ within $B(w,2r_0)\cap\Om$. 
	
	There are two cases to consider. First, assume $d_\Om(x)>\eta_0 r_0$. By again adjoining a ball, we see that for each $y\in B(x,d_\Om(x)-\eta_0\, r_0)$ the ball $B(y,\eta_0 r_0)$ is chainable to $B(z,\eta_0r_0)$. Thus, $B(x,d_\Om(x)-\eta_0\, r_0)\subset D$, which is a contradiction to $x\in \partial D$. Therefore $d_\Om(x)=\eta_0\, r_0$.
	
	Thus, we focus on points $x\in B(w,(1-21\eta_0)r_0)$ for which $d_\Om(x)=\eta_0\, r_0$. From the above discussions,
	we know that the ball $B(x,\eta_0 r_0)$ is chainable 
	to $B(z,\eta_0r_0)$ within $B(w,2r_0)\cap\Om$. By maximality of the well-placed collection, we must have some $i=1,\dots, N$ such that
	 $4B_i \cap 4B(x,\eta_0 r_0) \neq \emptyset$. Hence 
$B(x,\eta_0 r_0)\subset 12B_i \subset 30 B_i$. Therefore, $f=1$ in the ball $B(x,\eta_0 r_0)$, and the local Lipschitz property follows. In conclusion, $f$ is locally $\tfrac{1}{\eta_0 r_0}$-Lipschitz continuous on $B(w,(1-21\eta_0)r_0)$. 
	
Now let $E=B(z,r_0/4)\cap B(w,(1-21\eta_0)r_0)\cap D$ and $F=B(w,(1-21\eta_0)r_0)\cap\partial\Om$. 
		Recall that $X$ is a geodesic space.
		Let $\gamma$ be a geodesic connecting $z$ to $w$, and let $z''$ be a point along this geodesic so that
		$d(z'',z)=3r_0/16$, whence we also have that $d(z'',w)=13r_0/16$. We claim that $B(z'',r_0/16)\subset E$. 
		By the triangle inequality we know
		that $B(z'',r_0/16)\subset B(z,r_0/4)\cap B(w,(1-21\eta_0)r_0)$. 
		Thus, to show that $B(z'',r_0/16)\subset E$ it remains to show that $B(z'',r_0/16)\subset D$. We do this by constructing 
		a chain of balls connecting balls centered in this set to $B(z,\eta_0 r_0)$. Take any point 
		$x\in B(z'',r_0/16)\setminus D$, and let $\beta$ be the concatenation of a geodesic
		from $x$ to $z''$ and the subcurve of $\gamma$ from $z''$ to $z$. Choose $x_1=z, x_k=x$ and points $x_2,\dots, x_{k-1}$ along 
		$\beta$ with $d(x_2,z)=\eta_0 r_0$ and $d(x_i,x_{i+1})\leq \eta_0 r_0$ for $i=1,\dots, k-1$. Recall that $d_\Om(z)\ge r_0/2$ and
		$\eta_0<\tfrac{1}{168}$. It follows that $\{B(x_i,\eta_0r_0)\}_{i=1}^k$ 
		gives the chain connecting $B(x,\eta_0\, r_0)$ to $B(z,\eta_0\, r_0)$. Thus, $x\in D$ by definition.
			
	The sets $E$ and $F$ are disjoint subsets of the 
	ball $B(w,(1-21\eta_0)r_0)$. Moreover, by the hypothesis of the lemma, we know that 
	\begin{equation}\label{eq:F-bd}
		\Hcali{r_0}^{-t}(F)\ge c_0\, \frac{\mu(B(w,(1-21\eta_0)r_0))}{r_0^t}\gtrsim\frac{\mu(B(w,r_0))}{r_0^t}.
	\end{equation}
	Recall that $0<\eta_0<\tfrac{1}{168}$.
	Setting $B'':=B(z'',r_0/16)$ where $z''$ was the point on the geodesic from $z$ to $w$ as described above,
		that argument above demonstrated that $B''\subset E$. Therefore,
	we also have that
	\begin{equation*}
		\Hcali{r_0}^{-t}(E)\ge \Hcali{r_0}^{-t}(B''). 
	\end{equation*}
	To compute $\Hcali{r_0}^{-t}(B'')$ we argue as follows. Let $\{B^*_i\}_{i\in I\subset\N}$ be a cover of $B''$ by balls $B^*_i$ 
	of radii $r_i\le r_0$. Since for each $i\in I$ we have that $r_i\le  r_0$, then by the doubling property of $\mu$ we have
	\[
	\sum_{i\in I}\frac{\mu(B^*_i)}{r_i^t}\ge \sum_{i\in I}\frac{\mu(B^*_i)}{r_0^t}
	\ge \frac{\mu(B'')}{r_0^t}\gtrsim\frac{\mu(B(w,r_0))}{r_0^t}.
	\]
 It follows that 
		\begin{equation}\label{eq:E-bd}
			\Hcali{r_0}^{-t}(E)\ge \Hcali{r_0}^{-t}(B'')\gtrsim \frac{\mu(B(w,r_0))}{r_0^t}.
	\end{equation}
	
	We next show that $f=1$ on $F$ and that $f=0$ on $E$, setting us up to use Lemma~\ref{lem:Loewner}. Indeed, 
	by construction, we have $E\subset D\cap B(w,r_0)$, and so $f=g$ on $E$. 
		If $x\in E$, then $\dist(x,\partial\Om)\ge \tfrac{r_0}{4}$, and so for each $i=1,\cdots, N$ we have that 
	$\dist(x,31B_i)\ge \tfrac{r_0}{4}-32\eta_0 r_0>0$ since $\eta_0<\tfrac{1}{128}$. Therefore necessarily $g=0$ on $E$,
	that is, $f=0$ on $E$.  Since $F\cap D =\emptyset$, we have $f|_F=1$.
	
	We then get to
	\[
	\int_{B(w,(1-21\eta_0)r_0)}(\Lip f)^q\, d\mu\le \frac{1}{(\eta_0 r_0)^q}\mu\left(\bigcup_{i=1}^N 31B_i(\eta_0 r_0)\right)
	\lesssim \frac{1}{(\eta_0 r_0)^q}\sum_{i=1}^N\mu(B_i(\eta_0 r_0)).
	\]
	Now an application of Lemma~\ref{lem:Loewner} (with $\lambda$ determined by $c_0$ and the doubling constant
	related to $\mu$) above, together with the fact that $r_0>(1-21\eta_0)r_0\ge r_0/2$ gives
	\[
	\frac{1}{C} c_0 \frac{\mu(B(w,r_0))}{r_0^q}\le \frac{1}{(\eta_0 r_0)^q}\sum_{i=1}^N\mu(B_i(\eta_0 r_0)).
	\]
	Setting $K_0=C/c_0$ yields the desired conclusion.
\end{proof}

Next we wish to construct a Frostman measure, adapted to the use of codimensional Hausdorff content. 
The idea comes from~\cite{VK}, but unlike in~\cite{VK}, we are not interested in creating a doubling measure,
and so our construction is simpler and closer to that of~\cite{GK}.  Henceforth we consider
$0<\eta<\eta_1<\tfrac{1}{168}$, with $\eta_1$ sufficiently small as determined in the proof of Proposition~\ref{prop:Frostman}.
	
\begin{constr}\label{def:P-infty}
As in~\cite{GK}, fix $z_0\in\Om$ and let $r=d_\Om(z_0)$. We then choose a point $w_0\in\partial B(z_0,r)\cap\partial\Om$ and
set $P_0=\{w_0\}=\mathcal{W}_0$. We begin the construction 
by choosing a point $x_1$ in the geodesic  connecting $z_0$ to $w_0$ such that $d(x_1,w_0)=\eta r$. The ball
$B(x_1,\eta r)$ is chainable to $B(z_0, \eta r)$ by balls that are chosen with centers on the geodesic connecting $x_1$ to $z_0$.
Since $d_\Om(z_0)=r$, it follows that $d_\Om(x_1)=\eta r$.
Next, as in the statement of Lemma~\ref{lem:not-counting} with $z_0$ playing the role of $z$, $w_0$ playing the role 
of $w$, $r$ playing the
role of $r_0$, and $\eta$ playing
the role of $\eta_0$, let $\mathcal{W}_1=:\mathcal{W}_1(w_0)$ 
be a maximal collection of balls of radius $\eta r$ with the properties:
\begin{enumerate}
\item  $B(x_1,\eta r)\in \mathcal{W}_1(w_0)$,
\item it is well-placed along 
$B(w_0,2r)\cap\partial\Om$, 
\item each ball in this collection is finitely
chainable to $B(z_0,\eta r)$ by balls within $\Om\cap B(w_0,2r)$. 
\end{enumerate}
Let 
$P_1(w_0)$  consist of a choice of one point in
$B(w_0,2r)\cap\partial B\cap\partial\Om$ for each $B\in \mathcal{W}_1$. For $x_1$, we choose $w_0$. 
From the conditions related to $\mathcal{W}_1(w_0)$, it follows that $P_1(w_0)$ has as many
distinct points as the number of distinct balls in the collection $\mathcal{W}_1(w_0)$. 
We
also set $P_1=\bigcup_{w\in P_0}P_1(w)$, which, in this case, equals $P_1(w_0)$.
By construction, $P_0\subset P_1$. 
	
We inductively construct $\mathcal{W}_{k+1}$,  and 
$P_{k+1}$, given the successful construction of $\mathcal{W}_{k}$ and
$P_k$ as follows: for each $w\in P_k$, there is a ball $B_w\in \mathcal{W}_k$ such that 
$w\in\partial B_w\cap\partial\Om$. Let $z_w$ be the center of $B_w$. We 
choose $x_w$ in the geodesic connecting $z_w$ and $w$ such that $d(x_w, w)=\eta^{k+1} r$. 
Note that $d_\Om(z_w)=\eta^k r$, and so $d_\Om(x_w)=\eta^{k+1}r$.
As in Lemma~\ref{lem:not-counting}, with $w$ playing the role of $w$, $z_w$ playing the
role of $z$, $\eta^k r$ playing the role of $r_0$, and $\eta$ playing the role of $\eta_0$ there, we set
$\mathcal{W}_{k+1}(w)$ to be a maximal collection of balls with
\begin{enumerate}
\item $B(x_w,\eta^{k+1}r)\in \mathcal{W}_{k+1}(w)$,
\item it is well-placed along 
$B(w,2\eta^{k}r)\cap\partial\Om$, 
\item each ball in this collection is finitely
chainable to $B(z_w,\eta^{k+1} r)$ by balls within $\Om\cap B(w_k,2\eta^{k}r)$. 
\end{enumerate}
Let $P_{k+1}(w)$ consist of a choice of a point in $B(w,2\eta^k r)\cap\partial B\cap\partial\Om$ for each 
$B\in \mathcal{W}_{k+1}(w)$
with the additional requirement that $w\in P_{k+1}(w)$, which ensures that $P_k\subset P_{k+1}$. 
We now set $\mathcal{W}_{k+1}:=\bigcup_{w\in P_k}\mathcal{W}_{k+1}(w)$, and $P_{k+1}=\bigcup_{w\in P_k}P_{k+1}(w)$.
		
We set $P_\infty$ to be the closure of $\bigcup_kP_k$. 
For each $k\in\N$ and $w\in P_k$, we set $A_k(w)$ to be the unique point in
$P_{k-1}$ such that $w\in P_{k}(A_k(w))$  
with the property that $w\in B(A_k(w),2\eta^{k-1}r)\cap\partial\Om$. Moreover, we set
$D_k(w)=P_{k+1}\cap B(w,2\eta^kr)$. Note that $P_k\cap B(w,2\eta^kr)\subset D_k(w)$.
As the points of $P_{k}$ are from boundaries of well-placed balls of radius $\eta^{k}r$, their centers are at a mutual
distance of  
at least $8\eta^{k}r$. Thus the sets $D_{k}(w)$ with $w\in P_{k}$ form a pairwise-disjoint partition of $P_{k+1}$. 
\end{constr}

The above construction gives us a Cantor-type subset of $\partial\Om$, which we will see
in the next section to be a subset of $\partial\Om_{z_0}(c)\cap\partial\Om\cap B(z_0,3d_\Om(z_0))$ 
for a large-enough choice of $c$.
We now create chains of balls with controlled length connecting consecutive layers of our construction above
before constructing a Frostman measure associated with $P_\infty$.
	
\begin{lemma}\label{lem-chain-bound-M}
There is a positive integer $M>3$, depending solely on the doubling constant of $\mu$ and the scale $\eta$, 
such that the following 
holds. Let $k$ be a positive integer and $B\in \mathcal{W}_k$, and let $w\in P_{k-1}$ such that $B\in \mathcal{W}_k(w)$. 
Let $B(z_w,\eta^{k-1}r)\in\mathcal{W}_{k-1}$ be the ball such that $w\in\partial B(z_w,\eta^{k-1}r)\cap\partial\Om$. 
Then there exists a chain of balls $B=:B_0^*, B_1^*,\cdots, B_N^*:=B(z_w,\eta^{k}r)$, all of radii $\eta^kr$, with 
$N\le M$ and  $B_i^*\subset B(w, 2\eta^{k-1}r)$ for $i=0,\ldots,N$.
\end{lemma}

\begin{proof}
By construction, $B$ is chainable to $B(z_w,\eta^{k}r)$ within 
$\Om\cap B(w,2\eta^{k-1}r)$ 
by some chain $B_0^*,\dots, B_N^*$,
as in the statement of the lemma, with radii $\eta^kr$.
Choose the chain with the smallest possible number of balls $N$ in the chain. We will now show that $N$ can be bounded by a 
constant $M$ that depends solely on $\eta$ and $\mu$. Let $K$ be the maximal number of points in $B(w_k,2\eta^{k-1}r)$ 
that is $\eta^{k}r/8$-separated, that is,
with pairwise distance at least $\eta^{k} r/8$. Set $S$ to be the collection of these points. By the doubling property of $\mu$,
there is at most $M/2$ number of such points, that is, $K\le M/2$, with $M$ depending solely on the doubling constant 
and $\eta^{k-1}r/(\eta^{k}r/8)=8/\eta$. We now show that $N\le M$.

For each ball $B_i^*$ with center $x_{B_i^*}$, we choose a point
$s_i\in S\cap B(x_{B_i^*},\eta^{k} r/8)$.
Such a point $s_i$ must 
exist by the maximality of the collection $S$. If $N>M=2K$, then by the pigeonhole principle there must 
exist some $s\in S$ and three indices $i_1,i_2,i_3$ (in increasing order) such that $s_{i_1}=s_{i_2}=s_{i_3}=s$.
We have $|i_3-i_1|\geq 2$ since $i_2$ lies between them. Notice that 
\[
d(x_{B_{i_3}^*}, x_{B_{i_1}^*})\leq 
d(x_{B_{i_3}^*}, s) + d(s,x_{B_{i_1}^*})\leq \eta^{k} r/4,
\]
and thus $x_{B_{i_3}^*}\in \overline{B_{i_1}^*}$. 
Therefore, if we remove the balls 
$B_j^*$ for $j=i_{1}+1, \dots, i_3-1$ from the chain $B_0^*, \dots, B_N^*$, the balls that remain form a strictly shorter chain. This is a contradiction 
of the minimality of $N$ and thus $N\le M$ as claimed.
\end{proof}

\begin{prop}\label{prop:Frostman}
Under the hypotheses stated in Theorem~\ref{thm:main}, and with $P_\infty$ as constructed above, we have
\begin{equation}\label{eqn:Pinfty-lowerbound}
\frac{\mu(B(w_0,r))}{r^p}\lesssim \Hcali{r}^{-p}(P_\infty).
\end{equation}
Moreover, there is a Radon probability measure $\nu_F$, supported on $P_\infty$, such that 
for $\xi\in P_\infty$ and $0<\rho<r$,
\begin{equation}\label{eqn:frostmanestimate}
\nu_F(B(\xi,\rho))\lesssim \frac{\mu(B(\xi,\rho))}{\mu(B(w_0,r))}\,  \left(\frac{r}{\rho}\right)^{p}.
\end{equation}
\end{prop}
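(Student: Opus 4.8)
The plan is to construct $\nu_F$ by a mass--distribution procedure along the tree $\{P_k\}$ produced in Construction~\ref{def:P-infty}, and then to deduce both assertions from it. Recall that each $w\in P_k$ with $k\ge1$ carries a ball $B_w=B(z_w,\eta^kr)\in\mathcal{W}_k^0$ with $w\in\partial B_w\cap\partial\Om$, together with a unique parent $w^-:=A_k(w)\in P_{k-1}$; for $w_0\in P_0$ set $B_{w_0}:=B(z_0,r)$. Put $W(w_0):=1$ and inductively
\[
W(w):=W(w^-)\,\frac{\mu(B_w)}{\sum_{v\in P_k,\ v^-=w^-}\mu(B_v)}\qquad(w\in P_k),
\]
so that $\nu_k:=\sum_{w\in P_k}W(w)\,\delta_w$ is a Borel probability measure for every $k$. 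Since each child of $w\in P_k$ lies in $B(w,2\eta^kr)$, testing against $1$-Lipschitz functions gives $\big|\int f\,d\nu_{k+1}-\int f\,d\nu_k\big|\lesssim\eta^kr$, so $(\nu_k)$ is weak--Cauchy; its limit $\nu_F$ is a Radon probability measure, and since $P_k\to P_\infty$ in the Hausdorff sense it is carried by $P_\infty$. (That the recursion never stops, i.e.\ each node has a child, is itself a consequence of Lemma~\ref{lem:not-counting}, whose conclusion at the relevant scale would otherwise force a ball of positive measure to have measure zero.)

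The crux is the pointwise bound
\[
W(w)\;\lesssim\;\eta^{-kp}\,\frac{\mu(B_w)}{\mu(B(w_0,r))}\qquad(w\in P_k).
\]
Applying Lemma~\ref{lem:not-counting} with $w^-$, $z_{w^-}$ and scale $\eta^{k-1}r$ in the roles of $w$, $z$, $r$ --- its codimensional thickness hypothesis at scale $\eta^{k-1}r$ being supplied by~\eqref{eq:Ass1} together with Lemma~\ref{lem:Hausdorff-scale} --- yields $\sum_{v^-=w^-}\mu(B_v)\ge K_0^{-1}\eta^q\,\mu(B_{w^-})$, hence $W(w)/\mu(B_w)\le(K_0/\eta^q)\,W(w^-)/\mu(B_{w^-})$. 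Iterating down to generation $0$ and using $\mu(B_{w_0})\approx\mu(B(w_0,r))$ gives $W(w)/\mu(B_w)\lesssim(K_0/\eta^q)^k/\mu(B(w_0,r))$. As $K_0$ depends only on the doubling, Poincar\'e and thickness constants (not on $\eta$), choosing $\eta$ in Construction~\ref{def:P-infty} small enough that $K_0\,\eta^{\,p-q}\le1$ --- possible exactly because $q<p$ --- turns this into the displayed bound.

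For~\eqref{eqn:frostmanestimate}, fix $\xi\in P_\infty$ and $0<\rho<r$, and take $k\ge1$ with $\eta^kr\le\rho<\eta^{k-1}r$, so $\eta^{-k}\approx r/\rho$. Every descendant of $w\in P_k$ lies within $\sum_{i\ge k}2\eta^ir\lesssim\eta^kr\le\rho$ of $w$, so $\nu_m(B(\xi,2\rho))\le\sum_{w\in P_k\cap B(\xi,C\rho)}W(w)$ for $m\ge k$, and passing to the limit $\nu_F(B(\xi,\rho))\le\sum_{w\in P_k\cap B(\xi,C\rho)}W(w)$. Propagating the disjointness of the fourfold dilates in Definition~\ref{def:well-placed} up the tree (here $\eta$ small enters once more) shows $\{B_w:w\in P_k\}$ is pairwise disjoint, while $B_w\subset B(\xi,C'\rho)$ for $w\in P_k\cap B(\xi,C\rho)$; hence $\sum_{w\in P_k\cap B(\xi,C\rho)}\mu(B_w)\le\mu(B(\xi,C'\rho))\lesssim\mu(B(\xi,\rho))$. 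Together with the second paragraph this gives
\[
\nu_F(B(\xi,\rho))\;\lesssim\;\eta^{-kp}\,\frac{\mu(B(\xi,\rho))}{\mu(B(w_0,r))}\;\approx\;\frac{\mu(B(\xi,\rho))}{\mu(B(w_0,r))}\Big(\frac r\rho\Big)^{p},
\]
which is~\eqref{eqn:frostmanestimate}.

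Finally,~\eqref{eqn:Pinfty-lowerbound} follows by the mass--distribution principle: given a cover of $P_\infty$ by balls $B_i=B(x_i,r_i)$ with $r_i\le r$, discard those disjoint from $P_\infty$, choose $\xi_i\in B_i\cap P_\infty$, note $B_i\subset B(\xi_i,2r_i)$, and invoke~\eqref{eqn:frostmanestimate} (which extends to radii $\le r$ up to a constant, since $P_\infty$ has diameter $\lesssim r$) to get $\nu_F(B_i)\lesssim\mu(B_i)\,(r/r_i)^p/\mu(B(w_0,r))$; summing, $1=\nu_F(P_\infty)\le\sum_i\nu_F(B_i)\lesssim\frac{r^p}{\mu(B(w_0,r))}\sum_i\frac{\mu(B_i)}{r_i^p}$, and taking the infimum over covers gives $\Hcali{r}^{-p}(P_\infty)\gtrsim\mu(B(w_0,r))/r^p$. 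The main obstacle is the balance hidden in the second paragraph: each generation of the mass split costs a factor $K_0/\eta^q$, so these costs accumulate as $(K_0/\eta^q)^k$, and this is dominated by the target weight $\eta^{-kp}$ only because $q<p$ (equivalently $t<p$), and only once $\eta$ has been fixed small in terms of the structural constants. A secondary difficulty is the scale bookkeeping across generations --- in particular verifying that~\eqref{eq:Ass1} can be used in exactly the form Lemma~\ref{lem:not-counting} demands at scale $\eta^{k-1}r$ --- together with the weak convergence defining $\nu_F$ and the separation of the sets $P_k$.
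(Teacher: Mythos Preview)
Your proof is correct and follows essentially the same route as the paper's: the same mass--distribution weights $W(w)$ (the paper denotes these $a(w,k)$), the same use of Lemma~\ref{lem:not-counting} to bound each generation's split by $K_0/\eta^q$, the same choice of $\eta$ small so that $K_0\eta^{\,p-q}\le 1$, and the same mass--distribution principle for~\eqref{eqn:Pinfty-lowerbound}. The one genuine variation is in how you pass from the node bound $W(w)\lesssim\eta^{-kp}\mu(B_w)/\mu(B(w_0,r))$ to the Frostman estimate~\eqref{eqn:frostmanestimate}: the paper selects the generation $k_\rho$ so that the ball $B(\xi,\rho)$ meets \emph{exactly one} node of $P_{k_\rho}$ (using the $8\eta^{k_\rho}r$--separation of $P_{k_\rho}$), and then bounds $\nu_F(B(\xi,\rho))$ by a single $a(w_{k_\rho},k_\rho)$; you instead allow several nodes of $P_k$ in $B(\xi,C\rho)$ and sum, exploiting the pairwise disjointness of the associated balls $B_w$ to control $\sum\mu(B_w)$ by $\mu(B(\xi,C'\rho))$. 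Both arguments are valid and cost only structural constants; the paper's single--node version is slightly cleaner, while yours avoids tracking the precise separation constant needed to isolate a unique node.
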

	
\begin{proof}
We prove the proposition by constructing a Frostman measure on the set $P_\infty$, that is, a measure $\nu_F$ on $P_\infty$ 
such that for $\xi\in P_\infty$ and $0<\rho<r$, the estimate~\eqref{eqn:frostmanestimate} holds.
This would complete the proof because  
for any cover of $P_\infty$ by balls $\{B_i(\rho_i)\}_{i\in I\subset\N}$ centered at points in $P_\infty$
with $\rho_i<r$, we have that
\[
1=\nu_F(P_\infty)\le \sum_i\nu_F(B_i(\rho_i))\lesssim \frac{r^{p}}{\mu(B(w_0,r))}\sum_i\frac{\mu(B_i(\rho_i))}{\rho_i^{p}}.
\]
Thus we obtain, by taking the infimum of the rightmost-hand side of the above over all covers, 
exactly the desired estimate \eqref{eqn:Pinfty-lowerbound}. 
Inequality~\eqref{eqn:frostmanestimate} will be established towards the end of the proof of this proposition,
see~\eqref{eq:Frostman-Control} below.

To construct a Frostman measure supported on $P_\infty$, we inductively construct the function 
$a:\bigcup_{k=0}^\infty P_k\times\{k\}\to[0,1]$ as follows. We first set
$a(w_0,0):=1$, and for $w\in P_1$, set
\[
a(w,1):=\frac{\mu(B(w,\eta r))}{\sum_{\zeta\in D_0(w_0)}\mu(B(\zeta,\eta r))}.
\]
Note that $P_0=\{w_0\}$; hence $D_0(w_0)=P_1$. It follows that $\sum_{w\in P_1}a(w,1)=1$.
Once $a(w,k)$ have been determined for each $w\in P_k$, we next set for $w\in P_{k+1}$,
\[
a(w,k+1):=a(A_{k+1}(w),k)\, \frac{\mu(B(w,\eta^{k+1}r))}{\sum_{\zeta\in D_k(A_{k+1}(w))}\mu(B(\zeta,\eta^{k+1}r))}.
\]
As pointed out in the construction of the sets $P_k$, we know that $D_k(w)$, $w\in P_k$ forms a pairwise
disjoint partition of $P_{k+1}$; we have for each $w'\in P_k$ that
\[
\sum_{w\in D_k(w')}a(w,k+1)=a(w',k).
\]
Finally, for each non-negative integer $k$, we set the measure
\begin{equation}\label{eq:Frostman-step-k}
\nu_k:=\sum_{w\in P_k}a(w,k)\, \delta_{w},
\end{equation}
where, for each $x\in X$, the measure $\delta_x$ is the probability measure supported on the singleton set $\{x\}$.
Note that the support of $\nu_k$ is $P_k\subset P_\infty$ and that $\nu_k(X)=1$. Indeed, it is immediate that $\nu_0(X)=1$, and that
\[
\nu_k(X)=\sum_{w\in P_k}a(w,k).
\]
By construction, when $k\ge 1$ we know that for each $\zeta\in P_{k-1}$,
\[
\sum_{w\in D_{k-1}(\zeta)}a(w,k)=a(\zeta,k-1),
\]
and so by induction we see that $\nu_k(X)=1$ for each $k\ge 1$ as well.
 
In the rest of this argument, we use the fact that necessarily, $\eta<\eta_1<\tfrac{1}{168}$.
Suppose that $w\in D_{k+l}(D_{k+l-1}(\cdots(D_k(w_k))\cdots))$
for some $w_k\in P_k$ and $l\ge 1$. Then we must have that $w\in B(w_k,3\eta^kr)$ and,
by the well-placedness property, the ball $B(w_k,4\eta^kr)$ does not intersect the ball $B(z_k,4\eta^kr)$
whenever
$w_k,z_k\in P_k$ with $w_k\ne z_k$.
Therefore, if $k_2>k_1$ and $w\in P_{k_1}$, then $w\in P_{k_2}$ and
\begin{equation}\label{eq:nontriv-Frost}
\nu_{k_2}(B(w,3\eta^{k_1}r))=\nu_{k_1}(B(w,\eta^{k_1}r))=a(w,k_1).
\end{equation}
Note also that for $k\ge 1$ and $w\in P_k$, by Lemma~\ref{lem:not-counting} 
with $A_k(w)$ playing the role of $w$, $x_{A_k(w)}$ playing the role of $z$, and with
$\eta_0=\eta$ and $r_0=\eta^{k-1}r$, we have 
\begin{align*}
\eta^q\, \mu(B(x_{A_k(w)}, \eta^{k-1}r))&\le K_0\, \sum_{\zeta\in D_k(A_k(w))\cap B(A_k(w),\eta^{k-1}r)}\mu(B(\zeta,\eta^{k}r))\\
 &\le K_0\, \sum_{\zeta\in D_k(A_{k}(w))}\mu(B(\zeta,\eta^{k}r)),
\end{align*}
and so
\begin{align*}
a(w,k)=\nu_k(B(w,\eta^kr))&=a(A_k(w),k-1)\frac{\mu(B(w,\eta^{k}r))}{\sum_{\zeta\in D_k(A_{k}(w))}\mu(B(\zeta,\eta^{k}r))}\\
&\le a(A_k(w),k-1)\frac{\mu(B(w,\eta^{k}r))}{K_0^{-1}\eta^q\mu(B(A_k(w),\eta^{k-1}r))}.
\end{align*}
An inductive argument gives us that for $k\ge 1$,
\[
a(w,k)\le \frac{\mu(B(w,\eta^kr))}{(K_0^{-1}\eta^q)^k\, \mu(B(w_0,r))}.
\]
Note that the constant $K_0$ does not depend on $\eta$, nor on $w$.
We fix $\eps>0$.
So if we choose $\eta_1$ small enough so that in addition we have $K_0^{-1}\ge \eta_1^{\eps}$, 
then for $0<\eta\le \eta_1$ we have that 
\[
a(w,k)\le \frac{\mu(B(w,\eta^kr))}{\eta^{(q+\eps)k}\, \mu(B(w_0,r))}.
\]
Let $0<\rho<r$ and $\xi\in\partial\Om$ such that $B(\xi,\rho)\cap P_\infty$ is non-empty. 
If $\rho\ge r/2$, then the claim follows from the doubling property of $\mu$ and the 
fact that $\nu_F(X)\le 1$ once $\nu_F$ is constructed, because each $\nu_k$ is a probability measure. Hence
we will focus on the case that $\rho<r/2$. Let $k_\rho$ be the unique non-negative integer such that
\[
\frac12\eta^{k_\rho+1}r\le \rho<\frac12\eta^{k_\rho}r.
\]
Then, as $P_k\subset P_{k+1}$ for each non-negative integer $k$, it follows that there is some positive integer $j\ge k_\rho$ and
$w\in P_j$ such that $w\in B(\xi,\rho)$.
 
By induction, we see that for each non-negative integer $m\le j$ there is some $w_m\in P_m$
such that $d(w,w_m)<\tfrac{2}{1-\eta}\eta^mr$. It follows that 
\[
d(\xi,w_m)<\frac{2}{1-\eta}\eta^mr+\rho.
\]
For each $w'\in P_m$ with $w'\ne w_m$, we have $d(w',w_m)\ge 8\eta^mr$ by the well-placedness property.
 For the choice of $m=k_\rho$, we have that
\[
B(\xi,\rho)\subset B(w_m,\tfrac{2}{1-\eta}\eta^mr+2\rho),\ \text{ and }\ B(w_m,\tfrac{2}{1-\eta}\eta^mr+2\rho)\cap P_m=\{w_m\}.
\]
So, thanks to~\eqref{eq:nontriv-Frost} and by the fact that $\tfrac{2}{1-\eta}\eta^{k_\rho}r+2\rho\le 3\eta^{k_\rho}r$,
 we have for each $k\ge k_\rho+1$,
\begin{align*}
\nu_k(B(\xi,\rho))\le \nu_k(B(w_{k_\rho},\tfrac{2}{1-\eta}\eta^{k_\rho}r+2\rho)) 
	&= \nu_{k_\rho}(B(w_{k_\rho},\tfrac{2}{1-\eta}\eta^{k_\rho}r+2\rho))\\
	&\le\nu_{k_\rho}(B(w_{k_\rho},\eta^{k_\rho}r))\\
	&\le \frac{\mu(B(w_{k_\rho},\eta^{k_\rho}r))}{\eta^{k_\rho(q+\eps)}\, \mu(B(w_0,r))}\\
	&\approx \frac{\mu(B(\xi,\rho))}{\mu(B(w_0,r))}\,  \left(\frac{r}{\rho}\right)^{q+\eps}.
\end{align*}
Here the comparison constant depends solely on the doubling constant of $\mu$ and the choice of  $\eta$.
		
By the Banach--Alaoglu theorem, we have a weak*-convergent 
subsequence of the sequence of Radon probability measures $\nu_k$
to a Radon measure $\nu_F$ that lives inside $\partial\Om$, with $\nu_F(P_\infty)=1$. We obtain then 
also that for $\xi\in\partial\Om$ and $0<\rho<r$,
\begin{equation}\label{eq:Frostman-Control}
\nu_F(B(\xi,\rho))\lesssim \frac{\mu(B(\xi,\rho))}{\mu(B(w_0,r))}\,  \left(\frac{r}{\rho}\right)^{q+\eps}.
\end{equation}
This measure $\nu_F$ is our Frostman measure, with the choice of $\eps=p-q$.
\end{proof}

\section{Proof of Theorem~\ref{thm:main}}\label{Sec:4}  
	
In this section, we complete the proof of the first main result of this paper, Theorem~\ref{thm:main}. 
Thanks to Proposition~\ref{prop:Frostman} and the fact that the visible boundary is closed, it suffices to show that 
$\bigcup_k P_k$ is a subset of $\partial\Omega_{z_0}(c) \cap \partial \Omega$ for some $c\geq 1$. See 
Construction~\ref{def:P-infty} for the choice of the sets $P_k$.

Take $w\in P_k$ for some $k\geq 0$. Then there is a point $z_w$ such that $w\in\partial B(z_w,\eta^kr)$
and $B(z_w,\eta^kr)\in \mathcal{W}_k$. 
Joining $w$ to $z_w$ by a geodesic $\widetilde\gamma$, we have that for any $z$ along $\widetilde\gamma$, 
\begin{equation}\label{eq:John-Ball}
d_\Om(z)=d_B(z)\geq d_B(z_w)-d(z_w,z)=\ell(\widetilde\gamma)-\ell(\widetilde\gamma_{z_w,z})= \ell(\widetilde\gamma_{z,w}).
\end{equation}
A similar argument to the above also tells us that if $B(x,\rho)\subset\Om$, $y\in B(x,\rho)$, and $\beta$ is a geodesic
with end points $x,y$, then $\beta\subset \Om$ and for each point $z$ in $\beta$,
\[
d_\Om(z)\ge d_\Om(x)-d(x,z)\ge d(x,y)-d(x,z)=d(y,z). 
\]
By construction, $B(z_w,\eta^kr)\in \mathcal{W}_k(w_{-1})$ for some $w_{-1}\in P_{k-1}$. Let $z_{w_{-1}}$ be the corresponding
point in $\Om$ such that $B(z_{w_{-1}},\eta^{k-1}r)\in \mathcal{W}_{k-1}$ and 
$w_{-1}\in \partial\Om\cap \partial B(z_{w_{-1}},\eta^{k-1}r)$. We denote by $\{B_i^k=B(x_{i,k},\eta^kr)\}_{i=1}^{N_k}$ the 
finite chain of balls joining $B(z_w,\eta^kr)$ to $B(z_{w_{-1}},\eta^kr)$ given by Lemma \ref{lem-chain-bound-M} of length $N_k\le M$ for some $M$ depending on the doubling constant and $\eta$. For $i=1,\cdots, N_k$ let $\gamma_{i,k}$ be a geodesic
with end points $x_{i,k}$, $x_{i+1,k}$, and note that $\gamma_{i,k}$ lies in $\Om$ with $\ell(\gamma_{i,k})< \eta^kr$.

As $d(x_{i,k},x_{i+1,k})<\eta^kr$, it follows that for each $z\in\gamma_{i,k}$, we either have $d(z,x_{i,k})\le \eta^kr/2$ or 
$d(z,x_{i+1,k})<\eta^kr/2$. Thus, $d_\Om(z)\ge \frac{\eta^k\,r}{2}$. Let $\gamma_k$ be the concatenation of
the curves $\gamma_{i,k}$, $i=1,\cdots, N_k$; then, $\ell(\gamma_k)\le N_k\eta^kr$, and so for each $z\in\gamma_k$ we have
\[
d_\Om(z)\ge \frac{\eta^kr}{2}\ge \frac{1}{2N_k}\ell(\gamma_k)\ge \frac{1}{2M}\ell(\gamma_k).
\]
Finally let $\gamma$ be the concatenation of $\widetilde{\gamma}$ and $\gamma_k,\, \gamma_{k-1},\, \cdots, \gamma_1$
to obtain a curve that connects $w$ to $z_0$ inside $\Om$. We claim that $\gamma$ is a John curve. Indeed, if 
$z\in\widetilde{\gamma}$, then by~\eqref{eq:John-Ball} we have that 
$d_\Om(z)\ge \ell(\gamma_{wz})=\ell(\widetilde{\gamma}_{w,z})$. If $z\in\gamma_j$ for some $j\in\{1,\cdots, k\}$, then
\[
d_\Om(z)\ge \frac{\eta^jr}{2}, \qquad 
\ell(\gamma_{w,z})\le \ell(\widetilde{\gamma})+\sum_{m=j}^k\ell(\gamma_m)\le \eta^kr+\sum_{m=j}^kM\, \eta^mr\le 2M\eta^jr.
\]
Thus, we have
\[
d_\Om(z)\ge \frac{1}{4M}\, \ell(\gamma_{w,z}).
\]
In other words, $\gamma$ is a $4M$-John curve with respect to $\Om$. 
	
Next, we show that, with $\gamma:[0,L]\to\overline{\Om}$,  the subdomain 
\[
C[\gamma]:=\bigcup_{t\in[0,L]}B(\gamma(t),d_{\Om}(\gamma(t))),
\]
is $(8M+1)$-John with respect to $C[\gamma]$ with John center $z_0$
and that $z_0\in C[\gamma]$. The latter is immediate from the fact that
$z_0$ is one of the terminal points of $\gamma$. We now show that $\gamma$ is a $4M$-John curve
with respect to this \emph{sub}domain $C[\gamma]$. Let $z\in\gamma$. Then by the above discussion we have that
\[
\ell(\gamma_{w,z})\le 4M\, d_\Om(z) \qquad \text{ and }\qquad 
d_\Om(z)\ge d_{C[\gamma]}(z)\ge d_{B(z,d_\Om(z))}(z)=d_\Om(z).
\]
The claim follows.
	
Finally, suppose that $y\in C[\gamma]$. Then there is some number $t_y\in[0,L]$ such that 
$y\in B(\gamma(t_y), d_\Om(\gamma(t_y)))\subset C[\gamma]$. Let $\beta_y$ be a geodesic connecting $y$ to
$\gamma(t_y)$, and let $\alpha_y$ be the concatenation of the paths $\beta_y$ with $\gamma_{\gamma(t_y),z_0}$.
We now show that $\alpha_y$ is an $(8M+1)$-John curve with respect to $C[\gamma]$. To this end, let 
$z\in \alpha_y$. If $z\in\beta_y$, then an argument as in~\eqref{eq:John-Ball} gives
\[
d_{C[\gamma]}(z)\ge d_{B(\gamma(t_y), d_\Om(\gamma(t_y)))}(z)\ge \ell((\beta_y)_{z,y}).
\]
If $z\in\gamma_{\gamma(t_y),z_0}$, then by the first part of the argument above, we have
\[
d_\Om(z)=d_{C[\gamma]}(z)\ge \frac{\ell(\gamma_{\gamma(t_y),z})}{4M}.
\]
Moreover, by the above and the triangle inequality, we have that
\[
\ell(\beta_y)\le d_\Om(\gamma(t_y))\le d(z,\gamma(t_y))+d_\Om(z)\le (4M+1)d_\Om(z)=(4M+1)d_{C[\gamma]}(z).
\]
It follows that 
\[
\ell((\alpha_y)_{z,y})=\ell(\gamma_{\gamma(t_y),z})+\ell(\beta_y)\le (8M+1)d_{C[\gamma]}(z).
\]
From the above two cases we have that $\alpha_y$ is an $(8M+1)$-John curve with respect to $C[\gamma]$ with
John center $z_0$. It follows that $C[\gamma]$ is $(8M+1)$-John with John center $z_0$.
	
Since unions of $(8M+1)$-John subdomains of $\Om$ with John center $z_0$ is also $(8M+1)$-John with John
center $z_0$, it follows that $C[\gamma]\subset\Om_{z_0}(8M+1)$, and as 
$w\in\partial C[\gamma]\cap\partial\Om$, it follows that 
$\bigcup_k P_k\subset\partial\Om_{z_0}(8M+1)\cap\partial\Om$. An appeal to Proposition~\ref{prop:Frostman}
now completes the proof of Theorem~\ref{thm:main}.

\section{Traces of Newton-Sobolev functions on the visible boundary} \label{Sec:5} 
	
	In this section we provide a proof of the main trace result, Theorem~\ref{thm:main2}.
Indeed, we will prove a more general version of Theorem~\ref{thm:main2}, namely, Theorem~\ref{thm:subsidiary1}
below. In what follows, we denote by $F_\nu$ a compact subset of 
$F=B(z_0,3\,d_\Om(z_0))\cap\partial\Om_{z_0}(c)\cap\partial\Om$
and a Radon measure $\nu$ supported on $F_\nu$ for which there is a constant $c_2>0$ such that 
whenever $\zeta\in F_\nu$ and $0<r<3\,d_{\Om}(z_0)$, we have
\begin{equation}\label{eq:upper-bound-local}
\nu(B(\zeta,r))\le c_2\, \frac{\mu(B(\zeta,r)\cap\Om)}{r^p}.
\end{equation}
Because of the metric doubling property of $\overline\Om$, for each 
$L\geq 3$, we may assume that the above inequality holds for every $0<r<L\,d_\Om(z_0)$ at 
the expense of increasing $c_2$.

With this additional condition satisfied, we are able to gain control over traces on $F_\nu$; this is the 
content of Theorem~\ref{thm:main2} below. Note that the set $P_\infty$ satisfies this condition,
but it is possible for there to be a larger subset $F_\nu\subset F$ satisfying this condition as well. The set
$P_\infty$ is a Cantor-type set because of the well-placedness condition from Definition~\ref{def:well-placed}.
It is possible to have a larger set $F_\nu$, which might contain non-trivial continua while satisfying the above
dominance condition.

\begin{theorem}\label{thm:subsidiary1}
Suppose that $(X,d,\mu)$ is complete. 
Let
the domain $\Om\subset X$, $z_0\in\Om$, and $c>1$ be such that
$F=B(z_0,3d_\Om(z_0))\cap\partial\Om\cap\partial\Om_{z_0}(c)$ contains a compact subset $F_\nu$
and a Radon measure $\nu$ supported on $F_\nu$ satisfying~\eqref{eq:upper-bound-local}.
In addition, suppose also that $\mu\vert_\Om$ is doubling and supports a $\widehat{q}$-Poincar\'e inequality
for some $\max\{1,p\}<\widehat{q}<q<\infty$.
Then there is a bounded linear trace operator 
\[
T:N^{1,q}(\Om)\to B_{q,q,2}^{1-p/q}(F_\nu,d,\nu)\cap L^q(F_\nu,\nu)
\]
such that, with $D(z_0):=B(z_0,10c\,d_\Om(z_0))\cap\Om$, we have
\begin{equation}\label{eq:trace-energy}
\|Tu\|_{B^{1-p/q}_{q,q,2}(F_\nu,d,\nu)}^q\lesssim \inf_{g_u}\int_{D(z_0)} {g_u}^q\, d\mu,
\end{equation}
where the infimum is over all upper 
gradients $g_u$ of $u$ and the implied constant is independent of $u$ and $z_0$,
and 
\begin{equation}\label{eq:Lp-estimates}
\|Tu\|_{L^q(F_\nu,\nu)}^q\lesssim d_{\Om}(z_0)^{-p}\|u\|_{L^{q}(\Om_{z_0}(c))}^q 
   + d_{\Om}(z_0)^{q-p}\inf_{g_u}\int_{D(z_0)} {g_u}^q\, d\mu,
\end{equation}
where the infimum is over all upper 
gradients $g_u$ of $u$ and the implied constant is independent of $u$.
\end{theorem}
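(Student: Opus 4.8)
The plan is to realize the trace $Tu$ as the $\nu$-a.e.\ pointwise limit of averages of $u$ over a Whitney-type chain of balls inside $\Om_{z_0}(c)$ that ``shadows'' a boundary point, and then to bound the Besov and $L^q$ norms of this limit by a dyadic decomposition adapted to the Boman chain structure of the $c$-John domain $\Om_{z_0}(c)$, the $\widehat q$-Poincar\'e inequality on $\Om$, and the Frostman bound~\eqref{eq:upper-bound-local}. First I would record, for $\Om_{z_0}(c)$, the classical scaffolding: a bounded-overlap Whitney cover $\mathcal W$ by balls $B_Q=B(x_Q,r_Q)$ with $r_Q\approx d_\Om(x_Q)$ and $\lambda B_Q\subset\Om$; a central ball $B_\ast:=B(z_0,d_\Om(z_0)/2)$; and for each $\zeta\in F$ and each dyadic $\rho<d_\Om(z_0)$ a corkscrew ball $\Phi_\rho(\zeta)=B(y_{\zeta,\rho},b\rho)\subset\Om_{z_0}(c)$ with $d(y_{\zeta,\rho},\zeta)\approx\rho$, produced on a $c$-John curve from $z_0$ to $\zeta$ (the John condition forces $d_\Om(y_{\zeta,\rho})\gtrsim\rho$, and a geodesic argument as in~\eqref{eq:John-Ball} links $\Phi_{\rho/2}(\zeta)$ to $\Phi_\rho(\zeta)$ by $O(1)$ balls in $\Om$). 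These produce ``shadows'' $\mathrm{sh}(Q)\subset F$ with $\diam\mathrm{sh}(Q)\lesssim r_Q$ and $\mathrm{sh}(Q)\subset B(\pi_Q,Cr_Q)$, with the covering property $F\subset\bigcup_{r_Q\approx\rho}\mathrm{sh}(Q)$ and bounded overlap of same-scale shadows, nestedness of shadows along Whitney chains, and the Boman property that the chain joining any $B_Q$ to $B_\ast$ meets $O(1)$ balls of each generation; all such chains and enlarged balls stay in $D(z_0)$. Since $\mu$ is doubling and $\mu\vert_\Om$ is doubling, $\mu(\Phi_\rho(\zeta))\approx\mu(B(\zeta,\rho))\approx\mu(B(\zeta,C\rho)\cap\Om)$, so~\eqref{eq:upper-bound-local} gives the workhorse estimate $\nu(\mathrm{sh}(Q))\lesssim\mu(CB_Q)\,r_Q^{-p}$.

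\emph{Definition of $T$.} For $u\in N^{1,q}(\Om)$ with upper gradient $g_u\in L^q(\Om)$ I would set $Tu(\zeta):=\lim_{\rho\to0^+}u_{\Phi_\rho(\zeta)}$, which is~\eqref{eq:Thrace}. The $\widehat q$-Poincar\'e inequality on $\Om$ applied on $\lambda B_Q$, together with $\Phi_{\rho/2}(\zeta)\subset C\Phi_\rho(\zeta)$ and $\mu(\Phi_{\rho/2}(\zeta))\approx\mu(\Phi_\rho(\zeta))$, gives $|u_{\Phi_{2^{-k-1}\rho}(\zeta)}-u_{\Phi_{2^{-k}\rho}(\zeta)}|\lesssim 2^{-k}\rho\,(\fint_{C\Phi_{2^{-k}\rho}(\zeta)}g_u^{\widehat q}\,d\mu)^{1/\widehat q}$; combined with~\eqref{eq:upper-bound-local} the telescoping series converges at $\nu$-a.e.\ $\zeta$ (this is Lemma~\ref{lem:LebesguePts}, where the hypothesis $p<\widehat q$ enters), so $Tu$ is $\nu$-measurable, agrees $\nu$-a.e.\ with $\lim_{\rho\to0^+}u_{B(\zeta,\rho)\cap\Om}$, and $T$ is linear. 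As $\widehat q<q$, Jensen lets me upgrade every oscillation estimate below to exponent $q$: $|u_{B'}-u_{B''}|\lesssim r_{B'}(\fint_{CB'}g_u^q\,d\mu)^{1/q}$ for consecutive chain balls.

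\emph{The Besov bound (main step).} Using the covering/overlap/diameter properties of the shadows, I would reduce $\|Tu\|_{B^{1-p/q}_{q,q,2}(F_\nu,d,\nu)}^q$ to the dyadic quantity $\sum_{k}2^{k(q-p)}\sum_{Q:\,r_Q\approx2^{-k}}\int_{\mathrm{sh}(Q)}|Tu-(Tu)_{\mathrm{sh}(Q)}|^q\,d\nu$; the factor $2/\nu(B(\xi,2d(\zeta,\xi)))$ in the definition of the Besov norm is precisely what makes this reduction survive the nondoubling of $\nu$. For $\zeta\in\mathrm{sh}(Q)$ the chain of $\zeta$ passes through $B_Q$, so $|Tu(\zeta)-u_{CB_Q}|$ is a telescoping sum over the sub-chain from $\zeta$ up to $Q$, which has $O(1)$ balls per generation; a H\"older estimate in this chain, then $\int_{\mathrm{sh}(Q)}(\cdot)^q\,d\nu$ using $\mathrm{sh}(Q)\cap\mathrm{sh}(Q')=\mathrm{sh}(Q')$ for $Q'\preceq Q$ and $\nu(\mathrm{sh}(Q'))\lesssim\mu(CB_{Q'})r_{Q'}^{-p}$, gives
\[
\int_{\mathrm{sh}(Q)}|Tu-(Tu)_{\mathrm{sh}(Q)}|^q\,d\nu\ \lesssim\ \sum_{m\ge k}2^{-(m-k)\delta}\sum_{\substack{Q'\preceq Q\\ r_{Q'}\approx2^{-m}}}r_{Q'}^{\,q-p}\int_{CB_{Q'}}g_u^q\,d\mu
\]
for some $\delta>0$. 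Substituting, interchanging summation (each generation-$m$ ball $Q'$ has $O(1)$ ancestors of each generation $k\le m$), and summing the geometric series in $k$ (dominated by $k=m$) produces a factor $2^{m(q-p)}$ that exactly cancels $r_{Q'}^{\,q-p}\approx2^{-m(q-p)}$ — this is where the smoothness exponent $1-p/q$, i.e.\ $(1-p/q)q=q-p$, is essential. One is left with $\sum_{m}\sum_{Q':\,r_{Q'}\approx2^{-m}}\int_{CB_{Q'}}g_u^q\,d\mu$, and bounded overlap of same-generation Whitney balls and of the Whitney layers $\{x:d_\Om(x)\approx2^{-m}\}$ bounds this by $\int_{D(z_0)}g_u^q\,d\mu$; taking the infimum over $g_u$ yields~\eqref{eq:trace-energy}, with constant depending only on the structural data, hence independent of $u$ and of $z_0$. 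The hard part is exactly this step: reconciling the (generally unbounded-length) Boman chains of two boundary points with the nondoubling Frostman measure $\nu$ — which is done not by joining corkscrew balls directly but by the shadow bookkeeping above, in which the exponent identity $(1-p/q)q=q-p$ forces the scale factors to telescope away.

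\emph{The $L^q$ bound and conclusion.} Writing $Tu(\zeta)=u_{B_\ast}+(u_{\Phi_R(\zeta)}-u_{B_\ast})+\sum_{k\ge0}(u_{\Phi_{2^{-k-1}R}(\zeta)}-u_{\Phi_{2^{-k}R}(\zeta)})$ with $R=d_\Om(z_0)$, running the previous bookkeeping on the single chain from $\Phi_R(\zeta)$ down to $\zeta$ (plus the $O(1)$-chain $\Phi_R(\zeta)\to B_\ast$) gives $\int_{F_\nu}|Tu-u_{B_\ast}|^q\,d\nu\lesssim R^{\,q-p}\int_{D(z_0)}g_u^q\,d\mu$; here the telescoped scales are all $\le R$ and the weight $2^{k(q-p)}$ is absent, so the $R$-power no longer cancels. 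Covering $F_\nu$ by the $O(1)$ shadows of scale-$R$ Whitney balls and invoking~\eqref{eq:upper-bound-local} with radius $\approx R$ gives $\nu(F_\nu)\lesssim R^{-p}\mu(B(z_0,R))$, so $\int_{F_\nu}|u_{B_\ast}|^q\,d\nu\lesssim R^{-p}\int_{B_\ast}|u|^q\,d\mu\le R^{-p}\|u\|_{L^q(\Om_{z_0}(c))}^q$ (using $B(z_0,R)\subset\Om_{z_0}(c)$ and $\mu(B_\ast)\approx\mu(B(z_0,R))$). Adding these two estimates and taking the infimum over $g_u$ yields~\eqref{eq:Lp-estimates}; together with the Besov bound this shows $Tu\in B^{1-p/q}_{q,q,2}(F_\nu,d,\nu)\cap L^q(F_\nu,\nu)$ and that $T$ is a bounded linear operator with the stated quantitative bounds.
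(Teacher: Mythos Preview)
Your strategy is sound and leads to the theorem, but it is organized quite differently from the paper's proof. The paper does not build a Whitney cover with shadows, nor does it reduce the Besov seminorm to a dyadic shadow sum. Instead, for each pair $y,z\in F_\nu$ it builds a single bi-infinite chain of balls $\{B_k\}_{k\in\Z}$ along the two John curves $\gamma_y,\gamma_z$ emanating from $y$ and $z$, with a ``middle'' ball $B_0=B(z,3d(y,z))$ that may stick out of $\Om$; it absorbs the $B_0$-term into the $B_1$-term by replacing $g_u^{\widehat q}$ with $g^{\widehat q}:=g_u^{\widehat q}+M_\Om(\chi_{D(z_0)}g_u^{\widehat q})$ (this is the point where $\widehat q<q$ is actually used, via $L^{q/\widehat q}$-boundedness of the maximal operator), then applies H\"older with an auxiliary $\eps\in(0,q-p)$, divides by $d(y,z)^{q-p}\nu(B(z,2d(y,z)))$, and integrates the resulting pointwise inequality directly over $F_\nu\times F_\nu$. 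The key step is a Fubini manoeuvre: one fixes an interior point $w\in\Om$, observes that the set of $z$ with $w$ in the ``$z$-cone'' lies in $B(w,2\tau d_\Om(w))$, and that the $y$-integral becomes a convergent geometric series thanks to $\eps>0$; the $\nu$-denominators then cancel against $\nu(B(w,2\tau d_\Om(w)))$ without ever invoking doubling of $\nu$.

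Your Whitney--shadow bookkeeping is the discretized dual of this Fubini step (fix $Q$, integrate over $\mathrm{sh}(Q)$, rather than fix $w$, integrate over boundary points whose cone contains $w$), and it has the pleasant feature of keeping all chain balls inside $\Om$, so the maximal-function trick is unnecessary. Two points to watch if you carry it out in full: first, the reduction of $\|Tu\|_{B^{1-p/q}_{q,q,2}}^q$ to your dyadic shadow sum uses that for $2^{-k-1}<d(y,z)\le 2^{-k}$ one has $\nu(B(z,2d(y,z)))\ge \nu(B(z,2^{-k}))\ge \nu(A_k(z))$, which is exactly the $S=2$ mechanism you name --- but you must still handle pairs $y,z$ lying in \emph{neighbouring} shadows at scale $2^{-k}$, which costs an extra Poincar\'e term $|u_{CB_Q}-u_{CB_{Q'}}|$ at that scale. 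Second, the standard H\"older splitting of the telescoping sum yields an intermediate factor $2^{+(m-k)\beta q}$ (with $0<\beta<1-p/q$) rather than your $2^{-(m-k)\delta}$ with $\delta>0$; the subsequent $k$-sum still collapses correctly (the geometric series is dominated by $k=m$ because $q-p-\beta q>0$), so the endpoint is the same, but the displayed intermediate inequality as written is not what the H\"older step produces.
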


In the setting considered in Theorem~\ref{thm:subsidiary1}, 
we know from \cite[Proposition~7.1.]{Ash}
that $N^{1,q}(\Om)=N^{1,q}(\overline{\Om})$ and that the complete space $\overline{\Om}$, equipped with the zero-extension
of $\mu\vert_\Om$, is also doubling and supports a $\widehat{q}$-Poincar\'e inequality.
By the assumption that $\nu$ is supported on $F_\nu$, we know that
$\nu(B(\zeta,\rho)\cap F_\nu)>0$ for each $\zeta\in F_\nu$
and for all $\rho>0$. 
	
\begin{lemma}\label{lem:LebesguePts}
Under the assumptions of Theorem~\ref{thm:subsidiary1}, 
for each $u\in N^{1,q}(\Om)$ we have that $\nu$-a.e.~$x\in F_\nu$ is a Lebesgue point of $u$.
\end{lemma}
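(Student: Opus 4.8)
The plan is to establish a $\nu$-almost-everywhere Lebesgue point statement by a telescoping/maximal-function argument that transfers a Sobolev oscillation estimate from $\mu$ to $\nu$ via the Frostman-type domination~\eqref{eq:upper-bound-local}, and that uses the \emph{strict} inequality $p<\widehat q$ in an essential way. First I would invoke~\cite{Ash} to regard $u$ as an element of $N^{1,q}(\overline\Om)$ with an upper gradient $g\in L^q(\overline\Om)$, where $\overline\Om$ (with the zero-extension of $\mu\vert_\Om$) is doubling and supports a $\widehat q$-Poincar\'e inequality; write $s:=\widehat q$ and $r_0:=d_\Om(z_0)$. Fixing a suitably large ball $B_0$ containing $F_\nu$ and all balls used below, the set $B_0\cap\Om$ has finite measure, so H\"older's inequality gives $g\in L^s(B_0\cap\Om)$. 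Note that for $x\in F_\nu\subset\partial\Om\subset\overline\Om$ the ball $B_{\overline\Om}(x,\rho)$ has $\mu_{\overline\Om}$-measure $\mu(B(x,\rho)\cap\Om)$, so a Lebesgue point of $u$ in $\overline\Om$ is exactly a Lebesgue point in the sense of the statement.

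For $x\in F_\nu$ and integers $j\ge1$ set $B_j(x):=B_{\overline\Om}(x,2^{-j}r_0)$; the $s$-Poincar\'e inequality on $\overline\Om$ controls the mean oscillation $\fint_{B_j(x)}|u-u_{B_j(x)}|\,d\mu$ by $b_j(x):=C\,2^{-j}r_0\,(\fint_{\kappa B_j(x)}g^s\,d\mu)^{1/s}$. The heart of the matter is the claim that $\sum_{j\ge1}b_j(x)<\infty$ for $\nu$-a.e.\ $x$. I would prove this by integrating against $\nu$: one H\"older step in the $x$-variable (to move the $1/s$-power outside) gives $\int_{F_\nu}\sum_j b_j\,d\nu\le\sum_j 2^{-j}r_0\,\nu(F_\nu)^{1-1/s}\big(\int_{F_\nu}\fint_{\kappa B_j(x)}g^s\,d\mu\,d\nu(x)\big)^{1/s}$, and by Tonelli
\[
\int_{F_\nu}\fint_{\kappa B_j(x)}g^s\,d\mu\,d\nu(x)=\int g^s(y)\Big(\int_{F_\nu\cap B(y,\kappa2^{-j}r_0)}\mu(\kappa B_j(x))^{-1}\,d\nu(x)\Big)\,d\mu(y).
\]
On the relevant range of $x$ the doubling property gives $\mu(\kappa B_j(x))\approx\mu(B(y,\kappa2^{-j}r_0))$, while applying~\eqref{eq:upper-bound-local} at a point $\zeta\in F_\nu\cap B(y,\kappa2^{-j}r_0)$ together with doubling yields $\nu(F_\nu\cap B(y,\kappa2^{-j}r_0))\lesssim\mu(B(y,\kappa2^{-j}r_0))\,(2^{-j}r_0)^{-p}$ (enlarging $L$ in~\eqref{eq:upper-bound-local} as permitted by the remark preceding the theorem, so the estimate is available for all radii used). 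Hence the inner integral is $\lesssim(2^{-j}r_0)^{-p}$, so $\int_{F_\nu}\fint_{\kappa B_j(x)}g^s\,d\mu\,d\nu(x)\lesssim 2^{jp}r_0^{-p}\|g\|_{L^s(B_0\cap\Om)}^s$, and substituting back gives
\[
\int_{F_\nu}\sum_{j\ge1}b_j\,d\nu\ \lesssim\ \nu(F_\nu)^{1-1/s}\,\|g\|_{L^s(B_0\cap\Om)}\,r_0^{1-p/s}\sum_{j\ge1}2^{-j(1-p/s)},
\]
which is finite precisely because $p<s=\widehat q$. Therefore $\sum_j b_j(x)<\infty$ for $\nu$-a.e.\ $x\in F_\nu$.

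To finish, I would fix such an $x$ and run the standard telescoping: $\fint_{B_j(x)}|u-u_{B_j(x)}|\,d\mu\lesssim b_j(x)\to0$, and $|u_{B_j(x)}-u_{B_{j+1}(x)}|\lesssim\fint_{B_j(x)}|u-u_{B_j(x)}|\,d\mu\lesssim b_j(x)$ shows $(u_{B_j(x)})_j$ is Cauchy with limit $\widetilde u(x)$ satisfying $|u_{B_j(x)}-\widetilde u(x)|\lesssim\sum_{k\ge j}b_k(x)\to0$; then for $2^{-j-1}r_0<r\le2^{-j}r_0$ the inclusion $B(x,r)\subset B_j(x)$ together with doubling gives $\fint_{B(x,r)}|u-\widetilde u(x)|\,d\mu\lesssim\fint_{B_j(x)}|u-u_{B_j(x)}|\,d\mu+\sum_{k\ge j}b_k(x)\to0$ as $r\to0$, i.e.\ $x$ is a Lebesgue point of $u$. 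The step I expect to be the real obstacle is the transfer estimate of the second paragraph: one cannot simply cite a $\mu$-a.e.\ Lebesgue-point theorem, since $\nu$ is in general singular with respect to $\mu$ (indeed $P_\infty$ is a Cantor-type set), so the summability of $\sum_j b_j(x)$ has to be produced by hand out of~\eqref{eq:upper-bound-local}, and it is exactly in the convergence of the geometric series $\sum_j 2^{-j(1-p/s)}$ that the hypothesis $p<\widehat q$ is used. (Alternatively one could route through the known fact that the non-Lebesgue set of a function in $N^{1,\widehat q}$ has vanishing $\Hcali{R}^{-t}$-content for every $t<\widehat q$, combined with the elementary observation that~\eqref{eq:upper-bound-local} forces $\Hcali{R}^{-p}(A)=0\Rightarrow\nu(A)=0$ for $A\subset F_\nu$; the self-contained argument above is closer in spirit to the covering arguments used elsewhere in the paper.)
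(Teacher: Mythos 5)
Your argument is correct, but it is not the route the paper takes. The paper's proof of this lemma is essentially a citation: after invoking \cite[Proposition~7.1]{Ash} to pass to $N^{1,q}(\overline{\Om})$ (as you also do), it combines \cite[Theorem~9.2.8]{HKSTbook} (the non-Lebesgue set of a Newton--Sobolev function has zero $\widehat{q}$-capacity) with \cite[Proposition~3.11]{GKS} and the observation, recorded in the remark after the statement, that \eqref{eq:upper-bound-local} forces $\nu\ll\mathcal{H}^{-p}$; since $p<\widehat q$, capacity-null sets are $\mathcal{H}^{-p}$-null and hence $\nu$-null. That is exactly the ``alternative route'' you sketch in your closing parenthesis. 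Your main argument instead produces the conclusion by hand: a dyadic Poincar\'e/telescoping scheme at each $x\in F_\nu$, with the $\nu$-a.e.\ summability of the oscillation bounds $b_j$ obtained by integrating against $\nu$, applying Tonelli, and using \eqref{eq:upper-bound-local} (together with doubling of $\mu\vert_\Om$ and the enlargement of $L$ permitted before the theorem) to bound the inner integral by $(2^{-j}r_0)^{-p}$, so that the resulting series $\sum_j 2^{-j(1-p/\widehat q)}$ converges precisely because $p<\widehat q$. The trade-off is clear: the paper's proof is two lines but leans on external capacity machinery and on the absolute-continuity remark, and it yields the slightly stronger statement that $\nu$-a.e.\ point is a Lebesgue point of the quasicontinuous representative; your proof is self-contained, uses only the Frostman-type bound and the $\widehat q$-Poincar\'e inequality on $\overline\Om$, and directly delivers the property actually used later, namely the existence of the limit value in \eqref{eq:Thrace}, which is all the trace construction requires. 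Both arguments use the strict inequality $p<\widehat q$ in the same essential way, and I see no gap in yours; the only points worth making explicit in a written version are that the balls in $\overline\Om$ centered on $\partial\Om$ carry the measure $\mu(B(x,\rho)\cap\Om)$ (so the averages are the ones appearing in \eqref{eq:Thrace}) and that the radii $\kappa 2^{-j}r_0$ stay within the range where \eqref{eq:upper-bound-local} applies after enlarging $c_2$, both of which you note.
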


Note that in this lemma we do not need the full strength of the above upper bound inequality~\eqref{eq:upper-bound-local}; that inequality
is needed only in the proof of the trace theorem, Theorem~\ref{thm:main2}. Instead, we need that $\nu \ll \mathcal{H}^{-p}$, which is implied by \eqref{eq:upper-bound-local}.

\begin{proof}
It follows from the discussion preceding the statement of the lemma, together with~\cite[Proposition~3.11]{GKS} and~\cite[Theorem~9.2.8]{HKSTbook}, that
	$\nu$-a.e.~$x\in F_\nu$ is a Lebesgue point of $u$.
\end{proof}

We are now ready to
prove Theorem~\ref{thm:subsidiary1}. This proof is broken into two parts, with
the first part proving the energy estimate~\eqref{eq:trace-energy}, and the second part proving the 
$L^p$-estimates~\eqref{eq:Lp-estimates} claimed in the theorem.
In proving this, we take inspiration from the proof of~\cite[Theorem~5.6]{Mal}. 
Here, for $z\in F_\nu$ a Lebesgue point of $u\in N^{1,q}(\overline{\Om})$, the trace $Tu(z)$ is defined by the property that
\begin{equation}\label{eq:Thrace}
\limsup_{r\to 0^+}\jint_{B(z,r)\cap\Om}|u-Tu(z)|\, d\mu=0.
\end{equation}
We begin by showing that the Besov energy of the trace $Tu$ on $F_\nu$ can be controlled by the 
Sobolev energy of $u\in N^{1,q}(\Om)$.	

\begin{proof}[Proof of Theorem~\ref{thm:subsidiary1}: Energy estimates]
Fix $u\in N^{1,q}(\Om)$ and $g_u\in L^q(\Om)$ an upper gradient of $u$,
and consider two Lebesgue points $y,z\in F_\nu\subset\partial\Om_{z_0}(c)$
of $u$. Since $F_\nu \subset F$, we can find two $c$-John (in $\Om_{z_0}(c)$)
curves $\gamma_y$ and 
$\gamma_z$, parametrized by arclength, starting from $y$ and $z$, respectively, and both 
terminating at the John center $z_0$. For each $k\in\N$, we set
\begin{equation}\label{eq:tk-rk}
t_k=t_{-k}:=\left(1-\frac{1}{2c}\right)^k\, d(y,z), \qquad r_k=r_{-k}:=\frac{1}{2c}\, t_k.
\end{equation}
Notice that the John constant $c$ is larger than $1$ as in the statement of the theorem. 
We next set $B_0:=B(z,3d(y,z))$, and for each $k\in\N$ we set
$B_k:=B(\gamma_z(t_k),r_k)$, $B_{-k}:=B(\gamma_y(t_{-k}),r_{-k})$. 
Note that for $k\in\N$, 
$|t_{k+1}-t_k|=d(y,z)\, \left(1-\tfrac{1}{2c}\right)^k\, \tfrac{1}{2c}=r_k$; therefore, for each $k\in\N$ we have that
$\gamma_{y}(t_{k+1})\in \overline{B_{-k}}$ and $\gamma_z(t_{k+1})\in \overline{B_k}$. Moreover, 
$r_k+r_{k+1}=\tfrac{1}{2c}\, d(y,z)\l \left(1-\tfrac{1}{2c}\right)^k\left[2-\tfrac{1}{2c}\right]$; therefore, setting
$\alpha=2-\tfrac{1}{2c}$, 
we have that $B_{k+1}\subset \alpha B_k$ 
and $B_{-(k+1)}\subset\alpha B_{-k}$
whenever $k\in\N\cup\{0\}$.
Also, from the $c$-John condition we know that when $k\in\N$, we have
$d_\Om(\gamma_z(t_k))\ge t_k/c=2r_k$; hence, as
$\alpha<2$, we have that $\alpha B_k\subset\Om$. A similar argument gives $\alpha B_{-k}\subset\Om$.

We now estimate $Tu(z)-Tu(y)$ as follows:
\begin{align*}
	|Tu(z)-Tu(y)|\le \sum_{k\in\Z}|u_{B_k}-u_{B_{k+1}}|
	&\lesssim\sum_{k\in\Z}\jint_{\alpha B_k}|u-u_{\alpha B_k}|\, d\mu\\
	&\lesssim \sum_{k\in\Z}r_k\, \left(\jint_{\alpha B_k}g_u^{\widehat{q}}\, d\mu\right)^{1/\widehat{q}},
\end{align*}
where we also set $r_0:=3d(y,z)$.
Note that $r_1=\tfrac{1}{2c}\, \left(1-\tfrac{1}{2c}\right)\, d(y,z)< r_0$;
moreover, for each $w\in \alpha B_1$ we have that 
\begin{align*}
d(w,z)\le d(w,\gamma_z(t_1))+t_1<\alpha r_1+t_1&= \left(\frac{\alpha}{2c}+1\right)\, \left(1-\frac{1}{2c}\right)\, d(y,z)\\
  &=\left(\frac{1}{c}-\frac{1}{4c^2}+1\right)\, \left(1-\frac{1}{2c}\right)\, d(y,z)< 2\, d(y,z).
\end{align*}
It follows that $\alpha B_1\subset B_0\subset\alpha B_0$. Hence, for each $w\in\alpha B_1$ we have that
\[
M_\Om (\chi_{D(z_0)}\, g_u^{\widehat{q}})(w)\ge \jint_{\alpha B_0}g_u^{\widehat{q}}\, d\mu,
\]
where, for each $w\in\Om$, we set
\[
M_\Om (\chi_{D(z_0)}\, g_u^{\widehat{q}})(w):=\sup_{w\in B}\jint_{B\cap\Om}\chi_{D(z_0)}\, g_u^{\widehat{q}}\, d\mu
\]
with the supremum taken over all balls $B$ centered at points in $\overline{\Om}$ such that $w\in B$.
So, setting $g:=\left(g_u^{\widehat{q}}+M_\Om (\chi_{D(z_0)}\, g_u^{\widehat{q}})\right)^{1/{\widehat{q}}}$, we have
\[
r_0\left(\jint_{\alpha B_0}g^{\widehat{q}}\, d\mu\right)^{\frac{1}{\widehat{q}}}
	\lesssim r_1\, \left(\jint_{\alpha B_1}g^{\widehat{q}}\, d\mu\right)^{\frac{1}{\widehat{q}}}.
\]
Denoting the set of all non-zero integers by $\Z^*$, we have
\[
|Tu(z)-Tu(y)|\lesssim \sum_{k\in\Z}r_k\, \left(\jint_{\alpha B_k}g_u^{\widehat{q}}\, d\mu\right)^{\frac{1}{\widehat{q}}}
	\le \sum_{k\in\Z^*}r_k\, \left(\jint_{\alpha B_k}g^{q}\, d\mu\right)^{\frac{1}{q}}.
\]
Recall that $p<q$; hence we can choose $\eps>0$ such that $p+\eps<q$. Now, by an application of 
H\"older's inequality, we see that
\begin{align*}
|Tu(y)-Tu(z)|\lesssim 
	\sum_{k\in\Z^*}r_k^{1-\frac{p+\eps}{q}}\, r_k^{\frac{p+\eps}{q}}\left(\fint_{\alpha B_k}\!g^{{q}}\,d\mu\right)^{\frac{1}{{q}}}
	\leq \left(\sum_{k\in\Z^*}r_k^{q'\left(1-\frac{p+\eps}{q}\right)}\right)^{\frac{1}{{q'}}}
	\left(\sum_{k\in\Z^*}r_k^{p+\eps}\fint_{\alpha B_k}\!g^{{q}}\,d\mu\right)^{\frac{1}{{q}}},
\end{align*}
where $q'=\frac{q}{q-1}$ is the H\"older dual of $q$.
		
Estimating the first factor on the right-hand side of the above inequality, we find that
\begin{align*}
\sum_{k\in\Z^*}r_k^{q'\left(1-\frac{p+\eps}{q}\right)} = \sum_{k\in\Z^*}r_k^{\frac{q-(p+\eps)}{q-1}}
	&=\sum_{k\in\Z^*}\left(\frac{d(y,z)}{2c}\left(1-\frac{1}{2c}\right)^{|k|}\right)^{\frac{q-(p+\eps)}{q-1}}\\
	&\approx d(y,z)^{\frac{q-(p+\eps)}{q-1}}\sum_{k\in\Z^*}\left(1-\frac{1}{2c}\right)^{|k|\frac{q-(p+\eps)}{q-1}}.
\end{align*}
The choice of $\eps$ such that $p+\eps<q$ ensures that
\[
\left(1-\frac{1}{2c}\right)^{\frac{q-(p+\eps)}{q-1}}<1,
\]
and so we have that 
\[
\sum_{k\in\Z^*}r_k^{q'\left(1-\frac{p+\eps}{q}\right)}\lesssim d(y,z)^{\frac{q-(p+\eps)}{q-1}}.
\]
Hence,
\[
|Tu(y)-Tu(z)|\lesssim d(y,z)^{\tfrac{q-(p+\eps)}{q}}\, 
	\left(\sum_{k\in\Z^*}r_k^{p+\eps}\fint_{\alpha B_k}\!g^{{q}}\,d\mu\right)^{\frac{1}{q}}.
\]
		
We set $w_k=z$ for $k\ge 1$ and $w_k=y$ for $k\le -1$; then for each $k\in\Z^*$, by the doubling property
of $\mu\vert_\Om$ and by $B(w_k,r_k)\cap\Om\subset (2c+1)B_k\cap\Om$, we have that 
\begin{equation}\label{eq:tau}
\frac{r_k^p}{\mu(\alpha B_k)}\lesssim\frac{r_k^p}{\mu(B(w_k,(4\tau)^2\, r_k)\cap\Om)}\lesssim\frac{1}{\nu(B(w_k,(4\tau)^2 r_k))},
\end{equation}
where we have chosen $\tau=8c^2$. 
Here we have also used the upper bound estimate~\eqref{eq:upper-bound-local}. Now 
\[
\frac{|Tu(y)-Tu(z)|}{d(y,z)^{1-p/q}}\lesssim 
d(y,z)^{-\eps/q}\, \left(\sum_{k\in \Z^*}\frac{r_k^\eps}{\nu(B(w_k,(4\tau)^2 r_k))}\int_{\alpha B_k}g^q\, d\mu\right)^{\frac{1}{q}},
\]
that is,
\[
\frac{|Tu(y)-Tu(z)|^q}{d(y,z)^{q-p}\, \nu(B(z,2d(y,z)))} \lesssim
\frac{d(y,z)^{-\eps}}{\nu(B(z,2d(y,z)))}\, \sum_{k\in \Z^*}\frac{r_k^\eps}{\nu(B(w_k, (4\tau)^2\, r_k))}\int_{\alpha B_k}g^q\, d\mu.
\]
We denote 
\[
C_{y,z}:=\bigcup_{k\geq 1}\alpha B_k\qquad C_{z,y}:=\bigcup_{k\leq -1}\alpha B_{k}
\]
the two ``cones'', $C_{y,z}$ with vertex at $z$ and $C_{z,y}$ with vertex at $y$.
Note that these cones consist of balls with centers located on John curves terminating at the John 
center $z_0$; hence, $C_{y,z}\cup C_{z,y}\subset D(z_0)$.
For $w\in\alpha B_k$ we have that 
$d(w,w_k)<\alpha r_k+t_k=(\alpha+2c)\,r_k<4\tau r_k$. Hence,
\begin{align*}
\sum_{k\in \Z^*}\frac{r_k^\eps}{\nu(B(w_k,(4\tau)^2\, r_k))}&\int_{\alpha B_k}g^q\, d\mu
\lesssim
\sum_{k\in \Z^*}\int_{\alpha B_k}\frac{d(w,w_k)^\eps\, g(w)^q}{\nu(B(w_k, 4\tau d(w,w_k)))}\, d\mu(w)\\
\approx 
         \sum_{k=1}^\infty \int_{\alpha B_k}&\frac{d(w,z)^\eps\, g(w)^q}{\nu(B(z,4\tau d(w,z)))}\, d\mu(w)
	+\sum_{k=1}^\infty \int_{\alpha B_{-k}}\frac{d(w,y)^\eps\, g(w)^q}{\nu(B(y,4\tau d(w,y)))}\, d\mu(w)\\
	\approx \int_{C_{y,z}}&\frac{d(w,z)^\eps\, g(w)^q}{\nu(B(z,4\tau d(w,z)))}\, d\mu(w)
	+\int_{C_{z,y}}\frac{d(w,y)^\eps\, g(w)^q}{\nu(B(y,4\tau d(w,y)))}\, d\mu(w).
\end{align*}
In the last step above we have used the fact that the balls $\alpha B_k$, $k\in\Z^*$, have bounded overlap; no more
than $3$ of these balls overlap at any point in $\Om$. 
Now integrating over $F_\nu\times F_\nu$, we obtain
\begin{align*}
\int_{F_\nu}\int_{F_\nu} &\frac{|Tu(y)-Tu(z)|^q}{d(y,z)^{q-p}\, \nu(B(z,2d(y,z)))}\,  d\nu(y)\, d\nu(z) \lesssim\\
\int_{F_\nu}\int_{F_\nu} & \frac{d(y,z)^{-\eps}}{\nu(B(z,2d(y,z)))}\,
      \bigg[\int_{C_{y,z}}\frac{d(w,z)^\eps\, g(w)^q}{\nu(B(z,4\tau d(w,z)))}\, d\mu(w)\\
&\hskip 3cm+\int_{C_{z,y}}\frac{d(w,y)^\eps\, g(w)^q}{\nu(B(y,4\tau d(w,y)))}\, d\mu(w)\bigg]\, d\nu(y)\, d\nu(z)
	\ \ =\ \ I+J,
\end{align*}
where
\begin{align*}
I&:=\int_{F_\nu}\int_{F_\nu} \frac{d(y,z)^{-\eps}}{\nu(B(z,2d(y,z)))}\, 
	\int_{C_{y,z}}\frac{d(w,z)^\eps\, g(w)^q}{\nu(B(z,4\tau d(w,z)))}\, d\mu(w)\, d\nu(y)\, d\nu(z),\\
	J&:=\int_{F_\nu}\int_{F_\nu} \frac{d(y,z)^{-\eps}}{\nu(B(z,2d(y,z)))}\, 
	\int_{C_{z,y}}\frac{d(w,y)^\eps\, g(w)^q}{\nu(B(y,4\tau d(w,y)))}\, d\mu(w)\, d\nu(y)\, d\nu(z).
\end{align*}
We obtain estimates for $I$, and a {\it mutatis mutandis} argument gives similar estimates for $J$.

Recall our choices of $t_k$ and $r_k$ from~\eqref{eq:tk-rk}.		
For $w\in C_{y,z}$ we know that there is some $k\ge 1$ such that $d(w,\gamma_z(t_k))<\alpha r_k$,
and so 
\[
d(w,z)<\alpha r_k+t_k
=\left(1+\frac{\alpha}{2c}\right)t_k
=\left(1+\frac{\alpha}{2c}\right)\, \left(1-\frac{1}{2c}\right)^k\, d(y,z)
	\le  \left(1+\frac{1}{c}\right)\left(1-\frac{1}{2c}\right)\, d(y,z). 
\]
Setting $\beta=\left(1+\tfrac{1}{c}\right)\left(1-\tfrac{1}{2c}\right)$,
 it follows that necessarily $d(y,z)>d(z,w)/\beta$. 
Moreover, as above, we have that $d(w,z)<\left(1+\frac{\alpha}{2c}\right)t_k$, and by
the John property of the curve $\gamma_z$, we also have 
$d_\Om(w)> d_\Om(\gamma(t_k))-\alpha r_k\ge \tfrac{t_k}{c}-\tfrac{\alpha}{2c}t_k=\tfrac{1}{4c^2}\, t_k=\tfrac{r_k}{2c}$. Hence
$d(w,z)<2c(2c+\alpha)d_\Om(w)$. So
if $w\in\Om$ is fixed, then we must have 
$z\in  
B(w,2\tau d_\Om(w))$, where we recall that $\tau=8c^2$. It follows that
\begin{align*}
I&=\int_{F_\nu}\int_{F_\nu} \frac{d(y,z)^{-\eps}}{\nu(B(z,2d(y,z)))}\, 
\int\limits_{D(z_0)}\frac{d(w,z)^\eps\, g(w)^q\, \chi_{C_{y,z}}(w)}{\nu(B(z,4\tau d(w,z)))}\, d\mu(w)\, d\nu(y)\, d\nu(z)\\
&\le \int_{F_\nu}\int_{F_\nu} \frac{d(y,z)^{-\eps}}{\nu(B(z,2d(y,z)))}\, 
\int\limits_{D(z_0)}\frac{d(w,z)^\eps\, g(w)^q\, \chi_{F_\nu\setminus B(z,d(w,z)/\beta)}(y)\,\chi_{F_\nu\cap B(w, 2\tau d_\Om(w))}(z)}{\nu(B(z,4\tau d(w,z)))}\, d\mu(w)\, d\nu(y)\, d\nu(z)\\
&=\int\limits_{D(z_0)}\int_{F_\nu}\int_{F_\nu} 
\frac{d(y,z)^{-\eps}}{\nu(B(z,2d(y,z)))}\, \frac{d(w,z)^\eps\, g(w)^q\, \chi_{F_\nu\setminus B(z,d(w,z)/\beta)}(y)\,\chi_{F_\nu\cap B(w, 2\tau d_\Om(w))}(z)}{\nu(B(z,4\tau d(w,z)))}\, 
d\nu(y)\, d\nu(z)\, d\mu(w)\\
&=\int\limits_{D(z_0)} g(w)^q\!\!\! \!\!\int\limits_{F_\nu\cap B(w,2\tau d_\Om(w))}\!\!\!d(w,z)^\eps\, \!\!\int\limits_{F_\nu\setminus B(z,d(w,z)/\beta)}
\!\!\frac{d(y,z)^{-\eps}}{\nu(B(z,2d(y,z)))}\, \frac{1}{\nu(B(z,4\tau d(w,z)))}\, 
d\nu(y)\, d\nu(z)\, d\mu(w).
\end{align*}
Note that
\begin{align*}
\int_{F_\nu\setminus B(z,d(w,z)/\beta)}
&\frac{d(y,z)^{-\eps}}{\nu(B(z,2d(y,z)))}\, \frac{1}{\nu(B(z,4\tau d(w,z)))}\, d\nu(y)\\
	=\sum_{j=1}^\infty &\int_{F_\nu\cap B(z, 2^jd(w,z)/\beta)\setminus B(z,2^{j-1}d(w,z)/\beta)}
	\frac{d(y,z)^{-\eps}}{\nu(B(z,2d(y,z)))}\, \frac{1}{\nu(B(z,4\tau d(w,z)))}\, d\nu(y)\\
	\lesssim \sum_{j=1}^\infty &\int_{F_\nu\cap B(z, 2^jd(w,z)/\beta)\setminus B(z,2^{j-1}d(w,z)/\beta)}
	\frac{2^{-\eps j}\, d(w,z)^{-\eps}}{\nu(B(z,2^{j}d(w,z)/\beta))}\, \frac{1}{\nu(B(z,4\tau d(w,z)))}\, d\nu(y)\\
	&\lesssim \sum_{j=1}^{K_{w,z}}  2^{-\eps j}d(w,z)^{-\eps}\, \frac{\nu(B(z,2^jd(w,z)/\beta))}{\nu(B(z,2^{j}d(w,z)/\beta))}\, 
	\frac{1}{\nu(B(z,4\tau d(w,z)))}.
\end{align*}
Here $K_{w,z}$ is the largest integer $j$ for which the annulus 
$F_\nu\cap B(z, 2^jd(w,z)/\beta)\setminus B(z,2^{j-1}d(w,z)/\beta)$ is non-empty
(recall that $F_\nu$ is bounded).
Simplifying, we obtain
\[
\int_{F_\nu\setminus B(z,d(w,z)/\beta)}
\frac{d(y,z)^{-\eps}}{\nu(B(z,2d(y,z)))}\, \frac{1}{\nu(B(z,4\tau d(w,z)))}\, d\nu(y)
\lesssim \frac{d(w,z)^{-\eps}}{\nu(B(z,4\tau d(w,z)))}.
\]
Hence
\[
I\lesssim \int_\Om g(w)^q\, \int_{F_\nu\cap B(w,2\tau d_\Om(w))}d(w,z)^\eps\, \frac{d(w,z)^{-\eps}}{\nu(B(z,4\tau d(w,z)))}\, d\nu(z)\, d\mu(w).
\]
Next, note that  $B(w,2\tau d_\Om(w))\cap F_\nu\subset B(z,4\tau d(w,z))$. Therefore,
\begin{align*}
	\int_{F_\nu\cap B(w,2\tau d_\Om(w))}d(w,z)^\eps\, \frac{d(w,z)^{-\eps}}{\nu(B(w,2\tau d(w,z)))}\, d\nu(z)
	\le\frac{\nu(B(w,2\tau d_\Om(w)))}{\nu(B(w,2\tau d_\Om(w)))}=1.
\end{align*}
It follows that
\[
I\lesssim \int_{D(z_0)}\! g(w)^q\, d\mu(w).
\]
Similarly, we obtain the estimate $J\lesssim\int_{D(z_0)} g^q\, d\mu$. 
		
Now, the boundedness of the Hardy-Littlewood maximal operator on 
$L^{q/\widehat{q}}(\Om,\mu)$ (since $\tfrac{q}{\widehat{q}}>1$)
and the doubling property of $\mu\vert_\Om$
implies that 
\begin{align*}
\int_{D(z_0)}\! g^q\, d\mu
&=\int_{D(z_0)} \! \left(g_u^{\widehat{q}}+M_\Om(\chi_{D(z_0)}\, g_u^{\widehat{q}})\right)^{\frac{q}{\widehat{q}}} d\mu\\
&\lesssim \int_{D(z_0)}\!g_u^q \,d\mu + \int_{\Om}\! \left(M_\Om (\chi_{D(z_0)}\, g_u^{\widehat{q}})\right)^{\frac{q}{\widehat{q}}} d\mu\\
&\lesssim \int_{D(z_0)}\!g_u^q \,d\mu,
\end{align*}
and so we reach the desired conclusion that
\[
\Vert Tu\Vert_{B^{1-p/q}_{q,q,2}(F_\nu, d, \nu)}^q\lesssim \int_{D(z_0)} g_u^q\, d\mu.\qedhere
\]
\end{proof}

Now we prove~\eqref{eq:Lp-estimates},
thus completing the proof of Theorem~\ref{thm:subsidiary1}.
Note that in the following argument  
we do not need the stronger $\widehat{q}$-Poincar\'e inequality; the weaker
$q$-Poincar\'e inequality suffices.

\begin{proof}[Proof of Theorem~\ref{thm:subsidiary1}: $L^q$-estimates]
As before, consider a Lebesgue point $z\in F_\nu$ of $u\in N^{1,q}(\Om)$, an upper gradient $g_u\in L^q(\Om)$ of $u$, and a 
$c$-John curve $\gamma_{z}$, parametrized by arclength, starting from $z$ and terminating at $z_0$. Modifying slightly the 
construction in the first part of the proof, for $k\in\N$ we set
		\[
		t_k:=\left(1-\frac{1}{2c}\right)^k\, \ell(\gamma_z), \qquad r_k:=\frac{1}{2c}\, t_k, \qquad B_k:=B(\gamma_z(t_k),r_k).
		\]
		We also set $B_0:=B(z_0,\frac{\ell(\gamma_z)}{2c})$. As before, for $k\in\N$, we have 
		$\gamma_z(t_{k+1})\in\overline{B_k}$ and $B_{k+1}\subset\alpha B_k\subset\Om$, where $\alpha=2-\frac{1}{2c}$. 
		By the John condition, $d_\Om(z_0)\geq\frac{1}{c}\ell(\gamma_z)$, and so $B_0\subset B(z_0,\frac{d_\Om(z_0)}{2})=:B^*$ and 
		$\overline{B_0}\subset\Om$. 
		
Note that
\[
|Tu(z)-u_{B^*}|\le |u_{B_0}-u_{B^*}|+\sum_{k=0}^\infty|u_{B_k}-u_{B_{k+1}}|.
\]
By the doubling property of $\mu$, the inequalities $\ell(\gamma_z)\geq d(z,z_0)\geq d_\Om(z_0)$, 
and the $q$-Poincar\'e inequality, we have that
\[
|u_{B_0}-u_{B^*}|\lesssim \fint_{B^*}\!|u-u_{B^*}|\,d\mu
\lesssim 
d_\Om(z_0)\left(\fint_{B^*}\!g_u^{{q}}\,d\mu\right)^{\frac{1}{q}}.
\]
Moreover, fixing $0<\beta_0<1$, 
we see by an application of the $q$-Poincar\'e inequality followed by H\"older's inequality that
\begin{align*}
\sum_{k=0}^\infty|u_{B_k}-u_{B_{k+1}}|\lesssim \sum_{k=0}^\infty\fint_{\alpha B_k}|u-u_{\alpha B_k}|\,d\mu
  &\lesssim \sum_{k=0}^\infty r_k\, \left(\fint_{\alpha B_k}g_u^{q}\, d\mu\right)^{1/q}\\
  &\leq \left(\sum_{k=0}^\infty r_k^{\beta_0q'}\right)^{1/q'}\, 
         \left(\sum_{k=0}^\infty r_k^{q(1-\beta_0)}\fint_{\alpha B_k}g_u^{q}\, d\mu\right)^{1/q},
\end{align*}
where $q'=\tfrac{q}{q-1}$ is the H\"older conjugate of $q$. As $\gamma_z$ is a $c$-John curve with end points $z_0$, the John
center, and $z$, it follows that 
\[
r_k=\frac{1}{2c}\, \left(1-\frac{1}{2c}\right)^k\, \ell(\gamma_z)\le \frac{1}{2}\, \left(1-\frac{1}{2c}\right)^k\, d_\Om(z_0).
\]
Hence
\[
\sum_{k=0}^\infty r_k^{\beta_0q'}\lesssim d_\Om(z_0)^{\beta_0q'}\, \sum_{k=0}^\infty \left(1-\frac{1}{2c}\right)^{\beta_0q'k}
\approx d_\Om(z_0)^{\beta_0q'},
\]
and so
\[
\sum_{k=0}^\infty|u_{B_k}-u_{B_{k+1}}|
  \lesssim d_\Om(z_0)^{\beta_0}\, \left(\sum_{k=0}^\infty r_k^{q(1-\beta_0)}\fint_{\alpha B_k}g_u^{q}\, d\mu\right)^{1/q}
  \lesssim d_\Om(z_0)^{\beta_0}\, \left(\sum_{k=0}^\infty \frac{r_k^{q(1-\beta_0)}}{\mu(\alpha B_k)}\,
       \int_{\alpha B_k}g_u^{q}\, d\mu\right)^{1/q}.
\]
As in the proof of~\eqref{eq:trace-energy}, by
the doubling property of $\mu\vert_\Om$ and by~\eqref{eq:tau}, 
and with $\tau=8c^2$, we see that $\mu(B(z,9\tau^2c\, r_k)\cap\Om)\lesssim \mu(\alpha B_k)$.
Therefore,
\begin{align}\label{eq:Tr-Est1}
|Tu(z)-u_{B^*}|^q&\lesssim d_\Om(z_0)^q\, \fint_{B^*}\!g_u^{{q}}\,d\mu\,+\, d_\Om(z_0)^{\beta_0q}\,
\sum_{k=0}^\infty \frac{r_k^{q(1-\beta_0)}}{\mu(B(z,9\tau^2c\, r_k)\cap\Om)}\, \int_{\alpha B_k}\!g_u^{q}\, d\mu\notag\\
 & \lesssim \frac{d_\Om(z_0)^{q-p}}{\nu(F_\nu)}\, \int_{B^*}\!g_u^{q}\,d\mu\, +\,
 d_\Om(z_0)^{\beta_0q}\,\sum_{k=0}^\infty \frac{r_k^{q(1-\beta_0)-p}}{\nu(B(z,9\tau^2c\, r_k))}\, \int_{\alpha B_k}g_u^{q}\, d\mu,
\end{align}
where we used the upper bound~\eqref{eq:upper-bound-local} in the last step. We also used the estimate
\begin{align}\label{eq:nuF-Bstar}
0<\nu(F_\nu)= \nu(B(z_0,3d_{\Om}(z_0)))=\nu(B(z,6d_{\Om}(z_0)))
&\lesssim \frac{\mu(B(z,6d_{\Om}(z_0))\cap\Om)}{d_{\Om}(z_0)^p}\notag\\
&\le \frac{\mu(B(z_0,9d_{\Om}(z_0))\cap\Om)}{d_{\Om}(z_0)^p}\ \lesssim \frac{\mu(B^*)}{d_{\Om}(z_0)^p}.
\end{align}
As before, we have that $(2c+\alpha)r_k\geq d(w,z)$ whenever $w\in\alpha B_k$.

Integrating~\eqref{eq:Tr-Est1} over $z\in F_\nu$,  we obtain 
\begin{align*}
\int_{F_\nu}|Tu(z)-u_{B^*}|^q\, d\nu\lesssim &\,d_\Om(z_0)^{q-p}\int_{B^*}g_u^q\, d\mu\\
 +\,d_\Om(z_0)^{\beta_0q}&\,  \int_{F_\nu}\left(\sum_{k=0}^\infty\int_{\Om_{z_0}(c)\cap B(z_0,3d_\Om(z_0))}
  \!\frac{r_k^{q(1-\beta_0)-p}}{\nu(B(z,9\tau^2c\, r_k))}\, \chi_{\alpha B_k}(w)\, g_u(w)^{q}\, d\mu(w)\right) d\nu(z),
\end{align*}

keeping in mind that the chain of balls $\alpha B_k$ depend on $z\in F_\nu$. For each $z\in F_\nu$, setting 
\[
C_z:=\bigcup_{k=0}^\infty\alpha B_k,
\]
we see that if a point $w\in\Om_{z_0}(c)\cap B(z_0,3d_\Om(z_0))$ belongs to the cone $C_z$, then necessarily there
is some $k$ for which $w\in\alpha B_k$ and so
 $d(w,z)\le(2c+\alpha)r_k<\tfrac32\tau r_k$, from where it follows that
$d_\Om(w)\le d(w,z)<\tfrac32\tau r_k$. Furthermore, 
$d_\Om(w)> \tfrac{t_k}{c}-\alpha r_k=\tfrac{1}{2c}r_k$; recall that $\alpha=2-\tfrac{1}{2c}$. That is,
\begin{equation}\label{eq:comp-rk-vs-dOm}
\frac{1}{2c}r_k<d_\Om(w)\le d(w,z)<\frac{3}{2}\tau r_k.
\end{equation}
Therefore,
$B(w,3\tau c\, d_\Om(w))\subset B(z,9\tau^2c\, r_k)$. Hence,
by the bounded overlap property of the balls $\alpha B_k$ and by~\eqref{eq:comp-rk-vs-dOm}, we have that 
\begin{align*}
\int_{F_\nu}|Tu(z)-u_{B^*}|^q\, d\nu\lesssim &\,d_\Om(z_0)^{q-p}\int_{B^*}g_u^q\, d\mu\, +\, \\
  +\,d_\Om(z_0)^{\beta_0q}\, & \int_{F_\nu}\left(
  \int_{\Om_{z_0}(c)\cap B(z_0,3d_\Om(z_0))}\frac{d_\Om(w)^{q(1-\beta_0)-p}}{\nu(B(w,3\tau c\, d_\Om(w)))}\, \chi_{C_z}(w)\, g_u(w)^{q}\, d\mu(w)\right)d\nu(z)\\
\lesssim &\,d_\Om(z_0)^{q-p}\int_{B^*}g_u^q\, d\mu\, +\,\\
 +\,d_\Om(z_0)^{\beta_0q}\, \int_{F_\nu}&\left(
  \int_{\Om_{z_0}(c)\cap B(z_0,3d_\Om(z_0))}\frac{d_\Om(w)^{q(1-\beta_0)-p}}{\nu(B(w,3\tau c\,d_\Om(w)))}
\, \chi_{B(w,3\tau c d_\Om(w))}(z)\, g_u(w)^{q}\, d\mu(w)\right) d\nu(z)\\
\lesssim &\,d_\Om(z_0)^{q-p}\int_{B^*}g_u^q\, d\mu\, +\,\\
 +\,d_\Om(z_0)^{\beta_0q}\, &
  \int_{\Om_{z_0}(c)\cap B(z_0,3d_\Om(z_0))}\!\!\!\!\!\!\!\!\!g_u(w)^q\, \left(\int_{F_\nu}\frac{d_\Om(w)^{q(1-\beta_0)-p}}{\nu(B(w,3\tau c\,d_\Om(w)))}\, \chi_{B(w,3\tau c d_\Om(w))}(z)\, d\nu(z)\right)d\mu(w)\\
= &\,d_\Om(z_0)^{q-p}\int_{B^*}g_u^q\, d\mu\, +\, \\
 +\,d_\Om(z_0)^{\beta_0q}\, &
  \int_{\Om_{z_0}(c)\cap B(z_0,3d_\Om(z_0))}\!\!\!\!\!\!\!\!\!g_u(w)^q\, d_\Om(w)^{q(1-\beta_0)-p}\,d\mu(w).
\end{align*}
As $q>p$, we can chose $\beta_0$ small enough so that 
$p<q(1-\beta_0)$.
As $d_\Om(w)\le 4d_\Om(z_0)$ when $w\in B(z_0,3d_\Om(z_0))$, 
and as $B^*\subset \Om_{z_0}(c)\cap B(z_0,3d_\Om(z_0))$, we see that
\begin{align*}
\int_{F_\nu}|Tu(z)-u_{B^*}|^q\, d\nu&\lesssim d_\Om(z_0)^{q-p}\int_{B^*}g_u^q\, d\mu\, +\,
 d_\Om(z_0)^{\beta_0q} \, d_\Om(z_0)^{q(1-\beta_0)-p}\, \int_{\Om_{z_0}(c)\cap B(z_0,3d_\Om(z_0))}\!\!\!\!\!\!\!\!\!g_u(w)^q\, d\mu(w)\\
 &\lesssim d_\Om(z_0)^{q-p}\, \int_{\Om_{z_0}(c)\cap B(z_0,3d_\Om(z_0))}\!\!\!\!\!\!g_u^q\, d\mu,
\end{align*}
and so by~\eqref{eq:nuF-Bstar},
\begin{align*}
\int_{F_\nu}\!|Tu(z)|^q\,d\nu&\lesssim \nu(F_\nu)\,|u_{B^*}|^q + d_{\Om}(z_0)^{q-p}\int_{\Om_{z_0}(c)}\! g_u^{q}\,d\mu\\
&\leq \frac{\nu(F_\nu)}{\mu(B^*)}\int_{\Om_{z_0}(c)}|u|^q\,d\mu + d_{\Om}(z_0)^{q-p}\int_{\Om_{z_0}(c)}\! g_u^{q}\,d\mu\\
&\lesssim \frac{1}{d_{\Om}(z_0)^p}\int_{\Om_{z_0}(c)}|u|^q\,d\mu+ d_{\Om}(z_0)^{q-p}\int_{\Om_{z_0}(c)}\! g_u^{q}\,d\mu.
\end{align*}
Therefore, we reach the desired conclusion. 
\end{proof}

Now we complete this section by providing the proof of Theorem~\ref{thm:main2}.

\begin{proof}[Proof of Theorem~\ref{thm:main2}]
With $\nu_F$ the Frostman measure constructed in Section~\ref{Sec:3}, we know from~\eqref{eq:Frostman-Control} that 
$\nu_F$ satisfies the dominancy condition~\eqref{eq:upper-bound-local}. Moreover, from the proof of Proposition~\ref{prop:Frostman},
in particular from~\eqref{eq:nontriv-Frost}, we know that for each $w\in P_\infty$ and $r>0$ that $\nu_F(B(w,r))>0$.
The proof is now completed by a direct application of Theorem~\ref{thm:subsidiary1} above.
\end{proof}

\section{Concluding remarks}\label{Sec:6}

In summary, from Theorem~\ref{thm:main} we know that any domain that satisfies~\eqref{eq:Ass1} has a sufficiently large
visible boundary corresponding to the $c$-John condition for suitable $c>1$. From Theorem~\ref{thm:main2} we also
know in addition that if the restriction of the measure and the metric on the domain both satisfy doubling property
and support a $p$-Poincar\'e inequality, then there is a bounded linear trace operator from the Newton-Sobolev
class of the domain to the corresponding Besov class on the visible boundary, provided an upper 
control~\eqref{eq:upper-bound-local} of the measure on the visible boundary also holds. To obtain a possible 
trace on the entire boundary of the domain, one might wish to move the point $z_0$ around the domain, and 
also increase the John constant $c$.
	
It is then expedient to know whether 
$\partial\Om=\bigcup_{c>1}\bigcup_{z_0\in\Om}\Om_{z_0}(c)$. As external cusp domains show, the above
is not always possible. So the next natural question to ask is whether, with the choice of $t$ that 
makes~\eqref{eq:Ass1} valid, we have
$\mathcal{H}^{-p}(\partial\Om\setminus \bigcup_{c>1}\bigcup_{z_0\in\Om}\Om_{z_0}(c))=0$. The next example shows
that this is not always the case, either.
	
\begin{example}
Let $X$ be the Euclidean plane, equipped with the Euclidean metric and the $2$-dimensional Lebesgue measure.
We construct a bounded domain $\Om\subset X$ as follows. For each positive integer $n$ we set
\[
\alpha_n:=\begin{cases} \frac{1}{n}-\pi &\text{ if }2\mid n,\\
\frac{1}{n} &\text{ if }2\nmid n,\end{cases}
\]
and
\[
\beta_n:=\begin{cases} \pi-\frac{1}{n} &\text{ if }2\mid n,\\
2\pi-\frac{1}{n} &\text{ if }2\nmid n.\end{cases}
\]
We now set $E_n:=\{(1-\tfrac1n)e^{i\theta}\, :\, \alpha_n\le \theta\le \beta_n\}$ for each positive integer $n$, and 
$\Om=\mathbb{D}\setminus\bigcup_{n\in\N}E_n$, where $\mathbb{D}$ is the unit disk in $\R^2$, centered
at $0$. Then note that
\[
\partial\Om\setminus \bigcup_{c>1}\bigcup_{z_0\in\Om}\Om_{z_0}(c)=\partial\mathbb{D}.
\]
Moreover, with $t=1$ the condition~\eqref{eq:Ass1} does hold, and as $\R^2$ satisfies a $1$-Poincar\'e inequality, we 
can consider any choice of $p$ with $1<p<2$.
\end{example}

Note also that we require the domain itself to support a Poincar\'e inequality in Theorem~\ref{thm:main2}. As the
slit disk $\Omega=\{(x,y)\in\R^2\, :\, x^2+y^2<1\}\setminus([0,1]\times\{0\})$ shows, even for John domains
trace results do not make sense without additional geometric constraints on the domain. The verification of 
a Poincar\'e inequality seems to be a reasonable such constraint. 
Given this, one might wonder whether any domain that satisfies the requirements of Theorem~\ref{thm:main2}
must necessarily be a John domain itself. This is not true, as the following example shows.

\begin{example}
Note that the unit disk $\mathbb{D}=\{(x,y)\in\R^2\, :\, x^2+y^2<1\}$, equipped with the $2$-dimensional Lebesgue
measure and the Euclidean metric, is doubling and supports a $p$-Poincar\'e inequality for each $1\le p<\infty$.
Although $\mathbb{D}$ is itself a John domain (and indeed, is a uniform domain), it is not impossible to
remove a closed subset $K\subset\mathbb{D}$ from $\mathbb{D}$ in order to destroy the $c$-John condition
for each $c>1$, whereas the resulting
domain $\Om=\mathbb{D}\setminus K$ is still doubling and supports the same Poincar\'e inequality as long as $K$ has zero $p$-capacity. Thus the John
condition is not a consequence of the support of Poincar\'e inequality. 
\end{example}

The above example reveals a further line of enquiry: is there a
way to modify the notion of visible boundary so that it does not rely solely on the John condition and leads to a
trace theorem that extends Theorem~\ref{thm:main2} so that obstacles such as capacity zero sets that destroy the John condition
are overlooked?

	\noindent {\bf Address:} \\
	
	\noindent S.E.-B.: University of Jyv\"askyl\"a, Department of Mathematics and Statistics, P.O. Box 35 (MaD), FI-40014 University of Jyv\"askyl\"a, Finland \\
	\noindent E-mail: S.E-B.: {\tt sylvester.d.eriksson-bique@jyu.fi}\\
	
	\vskip .2cm
	
	\noindent R.G.: Department of Mathematical Sciences, P.O.~Box 210025, University of Cincinnati, Cincinnati, OH~45221-0025, U.S.A.\\
	\noindent Department of Mathematics, Physics and Geology, Cape Breton University, Sydney, NS~B1Y3V3, Canada.\\
	\noindent E-mail: R.G.: {ryan\textunderscore gibara@cbu.ca}\\
	
	\vskip .2cm
	
	\noindent R.K.: Department of Mathematics and Systems Analysis, Aalto University, P.O. Box 11100, FI-00076 Aalto, Finland.
	\\
	\noindent E-mail:  R.K.: {\tt riikka.korte@aalto.fi}\\
	
	\vskip .2cm
	
	\noindent N.S.: Department of Mathematical Sciences, P.O.~Box 210025, University of Cincinnati, Cincinnati, OH~45221-0025, U.S.A.\\
	\noindent E-mail:  N.S.: {\tt shanmun@uc.edu}\\

\end{document}